\theoremstyle{plain}
\newtheorem{Thm}{Theorem}[section]
\newtheorem{Lem}[Thm]{Lemma}
\newtheorem{Prop}[Thm]{Proposition}
\newtheorem{Cor}[Thm]{Corollary}
\theoremstyle{definition}
\newtheorem{Def}[Thm]{Definition}
\newtheorem{Rem}[Thm]{Remark}
\newcommand{\eps}{\ensuremath{\varepsilon}}
\newcommand{\e}{{\rm e}} 
\renewcommand{\d}{{\rm d}} 
\newcommand{\tend}[2]{\mathrel{\mathop{\longrightarrow}\limits^{#1}_{#2}}}
\renewcommand{\hat}{\widehat}
\newcommand{\bN}{\ensuremath{\mathbb{N}}}
\newcommand{\bR}{\ensuremath{\mathbb{R}}}
\newcommand{\cC}{\ensuremath{\mathcal{C}}}
\newcommand{\cF}{\ensuremath{\mathcal{F}}}
\newcommand{\cS}{\ensuremath{\mathcal{S}}}
\newcommand{\sW}{\ensuremath{\mathscr{W}}}
\numberwithin{equation}{section}
\renewcommand\section{\@startsection {section}{1}{\z@}%
                                   {-3.5ex \@plus -1ex \@minus -.2ex}%
                                   {2.3ex \@plus.2ex}%
                                   {\normalfont\large\bf}}
\renewcommand\subsection{\@startsection {subsection}{1}{\z@}%
                                   {-3.5ex \@plus -1ex \@minus -.2ex}%
                                   {2.3ex \@plus.2ex}%
                                   {\normalfont\normalsize\bf}}
\newcommand{\absol}[1]{\left| #1 \right|} 
\newcommand{\rbra}[1]{\left( #1 \right)} 
\newcommand{\cbra}[1]{\left\{ #1 \right\}} 
\newcommand{\sbra}[1]{\left[ #1 \right]} 
\renewcommand{\subitem}{\par\hangindent 5mm \hspace*{-2mm}}
\renewcommand{\subsubitem}{\par\hangindent 10mm \hspace*{2mm}}
\begin{document}

\begin{center}
{\Large \bf Wiener integral for the coordinate process 
under the $ \sigma $-finite measure unifying Brownian penalisations} 
\end{center}
\begin{center}
Kouji \textsc{Yano}\footnote{
Department of Mathematics, Graduate School of Science,
Kobe University, Kobe, JAPAN.}\footnote{
The research was supported by KAKENHI (20740060).}
\end{center}
\begin{center}
{\small \today}
\end{center}

\begin{abstract}
Wiener integral for the coordinate process 
is defined under the $ \sigma $-finite measure unifying Brownian penalisations, 
which has been introduced by Najnudel, Roynette and Yor (\cite{NRY1} and \cite{NRY2}). 
Its decomposition before and after last exit time from 0 
is studied. 
This study prepares for the author's recent study \cite{Y2} of Cameron--Martin formula 
for the $ \sigma $-finite measure. 
\end{abstract}

\noindent
{\footnotesize Keywords and phrases: Stochastic integral, Brownian motion, 
Bessel process, penalisation.} 
\\
{\footnotesize AMS 2000 subject classifications: 
Primary
60H05; 
secondary
60J65; 
46G12. 
} 


\section{Introduction}

Let $ \Omega = C([0,\infty ) \to \bR) $ and let $ \cF_{\infty } = \sigma(X_s:s \ge 0) $ 
where $ (X_s:s \ge 0) $ stands for the coordinate process. 
Let $ R^+ $ denote the law on $ (\Omega,\cF_{\infty }) $ 
of the 3-dimensional Bessel process starting from 0. 
We denote by $ R^- $ 
the law on $ (\Omega,\cF_{\infty }) $ of $ (-X_t) $ under $ R^+ $. 
We define $ R = \frac{R^+ + R^-}{2} $; 
in other words, 
$ R $ is the law on $ \Omega $ of $ (\eps X_t) $ 
under the product measure $ P(\d \eps) \otimes R^+(\d X) $ 
where $ P(\eps=1)=P(\eps=-1)=1/2 $. 
For $ u>0 $ and for two processes 
$ X^{(u)} = (X_t:0 \le t \le u) $ and $ Y = (Y_t: t \ge 0) $, 
we define the concatenation $ X^{(u)} \bullet Y $ 
as 
\begin{align}
(X^{(u)} \bullet Y)_t = 
\begin{cases}
X_t     \quad & \text{if} \ 0 \le t < u , \\
Y_{t-u} \quad & \text{if} \ t \ge u \ \text{and} \ X_u=Y_0 , \\
X_u       \quad & \text{if} \ t \ge u \ \text{and} \ X_u \neq Y_0 . 
\end{cases}
\label{}
\end{align}
We define the concatenation $ \Pi^{(u)} \bullet R $ 
as the law on $ (\Omega,\cF_{\infty }) $ 
of the concatenation $ X^{(u)} \bullet Y $ 
under the product measure $ \Pi^{(u)}(\d X^{(u)}) \otimes R(\d Y) $. 

In this paper we consider the following $ \sigma $-finite measure $ \sW $ 
defined on $ (\Omega,\cF_{\infty }) $: 
\begin{align}
\sW = \int_0^{\infty } \frac{\d u}{\sqrt{2 \pi u}} \rbra{ \Pi^{(u)} \bullet R } . 
\label{eq: def of sW}
\end{align}
This measure $ \sW $ 
has been introduced by Najnudel, Roynette and Yor 
(\cite{NRY1} and \cite{NRY2}; see also \cite{YYY}) 
in order to give a global view 
on various Brownian penalisations, 
which were developed by Roynette, Vallois and Yor 
(See \cite{RVY0}, \cite{RY} and the references therein). 

The purpose of this paper 
is to define and study {\em Wiener integral}, 
i.e., stochastic integral 
whose integrand is a deterministic function $ f $, written as 
\begin{align}
\int_0^{\infty } f(s) \d X_s 
\quad \text{under $ \sW $}. 
\label{eq: w-int}
\end{align}
We discuss its decomposition into the sum of two Wiener integrals 
before and after last exit time from 0; 
the former is for Brownian bridge, 
while the latter is for 3-dimensional Bessel process. 

As an application of the Wiener integral under $ \sW $, 
the author studies, in a separate paper \cite{Y2}, 
{\em Cameron--Martin formula} for $ \sW $, i.e., 
quasi-invariance of $ \sW $ under deterministic translations. 

Our main results are stated in the remainder of this section. 
All of their proofs will be given in Section \ref{sec: Wint sW}.

$ \mathbf{1^{\circ}).} $ 
{\bf Wiener integrals.} 
Let us recall definition of Wiener integrals. 
If the integrand is an indicator, we define 
\begin{align}
\int_0^{\infty } 1_{[t_1,t_2)}(s) \d X_s = X_{t_2}-X_{t_1} . 
\label{}
\end{align}
We extend it linearly so that 
we define Wiener integral $ \int_0^{\infty } f(s) \d X_s $ 
if the integrand $ f $ is a step function. 
In order to extend it 
to more general integrand functions, 
we need certain properties peculiar to the process considered; 
see below.

The following facts are well-known: 
If a sequence $ \{ f_n \} $ of step functions 
approximates $ f $ in $ L^2(\d s) $, then, 
for Brownian motion $ \{ (X_s),W \} $, it holds that 
\begin{align}
\int_0^{\infty } f_n(s) \d X_s \tend{}{n \to \infty } \int_0^{\infty } f(s) \d X_s 
\quad \text{in $ L^2(W) $} 
\label{}
\end{align}
and, for Brownian bridge $ \{ (X_s),\Pi^{(u)} \} $ with $ u>0 $, it holds that 
\begin{align}
\int_0^u f_n(s) \d X_s \tend{}{n \to \infty } \int_0^u f(s) \d X_s 
\quad \text{in $ L^2(\Pi^{(u)}) $} . 
\label{}
\end{align}
The symmetrized 3-dimensional Bessel process $ \{ (X_t),R \} $ 
requires an integrand function $ f $ to belong to $ L^2(\d s) $ 
and as well to the following class: 
\begin{align}
L^1 \rbra{ \frac{\d s}{\sqrt{s}} } = 
\cbra{ f : \int_0^{\infty } |f(s)| \frac{\d s}{\sqrt{s}} < \infty } . 
\label{}
\end{align}
Now the following fact holds (see Section \ref{sec: Wint 3Bes} for details): 
If a sequence $ \{ f_n \} $ of step functions 
approximates $ f $ both in $ L^2(\d s) $ and in $ L^1(\frac{\d s}{\sqrt{s}}) $, i.e., 
\begin{align}
\int_0^{\infty } |f_n(s)-f(s)|^2 \d s + \int_0^{\infty } |f_n(s)-f(s)| \frac{\d s}{\sqrt{s}} 
\to 0 , 
\label{eq: approx1}
\end{align}
then 
\begin{align}
\int_0^{\infty } f_n(s) \d X_s \tend{}{n \to \infty } \int_0^{\infty } f(s) \d X_s 
\quad \text{in $ R $-probability} . 
\label{}
\end{align}
Note that we cannot dispense with the assumption $ f \in L^1(\frac{\d s}{\sqrt{s}}) $; 
see Remark \ref{rem: uncentered}.

$ \mathbf{2^{\circ}).} $ 
{\bf Approximation theorem.} 
Let us discuss the Wiener integral \eqref{eq: w-int}. 
Note that the measure $ \sW $ on $ \cF_{\infty } $ is singular to Wiener measure 
and also to BES$ (3) $ measure; in fact, it holds $ \sW $-a.e. that 
$ |X_t| \to \infty $ as $ t \to \infty $ and that 
$ (X_t) $ takes both positive and negative values. 
Thus, in order to define the Wiener integral \eqref{eq: w-int}, 
we can only utilize the definition \eqref{eq: def of sW}. 

We need the following notion of convergence. 
We say that a sequence $ \{ Z_n \} $ of measurable functionals 
{\em converge locally in $ \sW $-measure} to a measurable functional $ Z $ 
if, for any $ \eps>0 $ and any measurable set $ A $ with $ \sW(A)<\infty $, 
\begin{align}
\sW \rbra{ A \cap \cbra{ \absol{ Z_n-Z } \ge \eps } } \to 0 
\quad \text{as} \ n \to \infty . 
\label{}
\end{align}
Thanks to the $ \sigma $-finiteness of $ \sW $, 
this notion of convergence plays a key role; 
see Section \ref{sec: loc} for details, and also \cite{BeGi}.

We introduce the following class of functions: 
\begin{align}
L^1 \rbra{ \frac{\d s}{1 + \sqrt{s}} } = 
\cbra{ f : \int_0^{\infty } |f(s)| \frac{\d s}{1 + \sqrt{s}} < \infty } . 
\label{}
\end{align}
Note that $ f \in L^1(\frac{\d s}{1 + \sqrt{s}}) $ 
if and only if $ f $ is locally integrable 
and $ \int_v^{\infty } |f(s)| \frac{\d s}{\sqrt{s}} < \infty $ for any (large) $ v>0 $. 

Wiener integral for $ X $ under $ \sW $ 
may be defined through the following theorem. 

\begin{Thm} \label{thm: approx}
Let $ f \in L^2(\d s) \cap L^1(\frac{\d s}{1 + \sqrt{s}}) $. 
Let $ \{ f_n \} $ be a sequence of step functions 
such that $ \{ f_n \} $ approximates $ f $ 
both in $ L^2(\d s) $ and in $ L^1(\frac{\d s}{1 + \sqrt{s}}) $, i.e., 
\begin{align}
\int_0^{\infty } |f_n(s)-f(s)|^2 \d s 
+ \int_0^{\infty } |f_n(s)-f(s)| \frac{\d s}{1 + \sqrt{s}} 
\to 0 . 
\label{eq: approx}
\end{align}
(Note that this condition is weaker than \eqref{eq: approx1}.) 
Then there exists a random variable, 
which will be denoted by $ \int_0^{\infty } f(s) \d X_s $, such that 
\begin{align}
\int_0^{\infty } f_n(s) \d X_s 
\tend{}{n \to \infty } 
\int_0^{\infty } f(s) \d X_s 
\quad \text{locally in $ \sW $-measure} . 
\label{}
\end{align}
The limit random variable does not depend 
up to $ \sW $-null sets 
on the choice of the approximating sequence $ \{ f_n \} $. 
\end{Thm}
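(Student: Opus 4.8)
The plan is to prove that $\{Z_n := \int_0^\infty f_n(s)\,\d X_s\}$ is a Cauchy sequence for local convergence in $\sW$-measure; since this mode of convergence is complete for the $\sigma$-finite measure $\sW$ (Section \ref{sec: loc}, see also \cite{BeGi}), the limit $\int_0^\infty f(s)\,\d X_s$ will then exist, and its independence of the approximating sequence follows by the usual interleaving argument (given two sequences satisfying \eqref{eq: approx}, the interleaved sequence again satisfies \eqref{eq: approx}, forcing the two subsequential limits to agree $\sW$-a.e.). Thus it suffices, fixing $\eps>0$ and a set $A$ with $\sW(A)<\infty$, to bound $\sW(A\cap\{|Z_n-Z_m|\ge\eps\})$ and show it tends to $0$ as $n,m\to\infty$. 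Throughout, write $g=f_n-f_m$, which tends to $0$ both in $L^2(\d s)$ and in $L^1(\frac{\d s}{1+\sqrt s})$.

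The heart of the matter is the decomposition dictated by \eqref{eq: def of sW}: under $\Pi^{(u)}\bullet R$ the path is a Brownian bridge of length $u$ from $0$ to $0$ followed, independently, by a symmetrized $\mathrm{BES}(3)$ path, so that
\begin{align}
Z_n-Z_m = \underbrace{\int_0^u g(s)\,\d X_s}_{\text{bridge part}} + \underbrace{\int_u^\infty g(s)\,\d X_s}_{\text{Bessel part}},
\end{align}
with the two summands independent. The bridge part is centred Gaussian with variance $\int_0^u g^2\,\d s-\frac1u(\int_0^u g\,\d s)^2\le\|g\|_{L^2}^2$. For the Bessel part I would use the representation from Section \ref{sec: Wint 3Bes}: writing the post-$u$ path as $\pm\rho$ with $\rho$ a $\mathrm{BES}(3)$ from $0$ and $\d\rho_t=\d\beta_t+\rho_t^{-1}\,\d t$, the Bessel part equals $\pm$ a Gaussian martingale term (with variance $\le\int_u^\infty g^2\,\d s$) plus $\pm$ a drift term whose absolute value has $R$-expectation at most $c\int_u^\infty|g(s)|(s-u)^{-1/2}\,\d s$, since $\bE[\rho_t^{-1}]=c\,t^{-1/2}$. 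Collecting the two Gaussian contributions, their combined variance is at most $\|g\|_{L^2}^2$, uniformly in $u$.

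I would then split the defining integral of $\sW$ at a level $v$, chosen (using $\sW(A)<\infty$) so large that $\int_v^\infty\frac{\d u}{\sqrt{2\pi u}}(\Pi^{(u)}\bullet R)(A)$ is as small as desired; on $u>v$ I bound the event by $A$ and invoke this tail smallness, uniformly in $n,m$. On $0<u\le v$, Chebyshev controls the Gaussian part by a $v$-dependent constant times $\|g\|_{L^2}^2$, and Markov controls the drift part by a $v$-dependent constant times the double integral $\int_0^v\frac{\d u}{\sqrt u}\int_u^\infty|g(s)|(s-u)^{-1/2}\,\d s$. By Fubini and the Beta-integral kernel $\phi_v(s):=\int_0^{s\wedge v}\frac{\d u}{\sqrt u\sqrt{s-u}}$, one has $\phi_v(s)=\pi$ for $s\le v$ and $\phi_v(s)\sim 2\sqrt v\,s^{-1/2}$ as $s\to\infty$; the decisive point is that $\phi_v$ is bounded near the origin and decays like $s^{-1/2}$ at infinity, whence $C_v:=\sup_s(1+\sqrt s)\phi_v(s)<\infty$ and the double integral is at most $C_v\|g\|_{L^1(\frac{\d s}{1+\sqrt s})}$. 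Letting $n,m\to\infty$ with $v$ fixed then sends every contribution from $0<u\le v$ to $0$, completing the Cauchy estimate.

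The step I expect to be the main obstacle is this last weighting estimate, since it is precisely here that the averaging over the bridge length $u$ replaces the singular weight $\frac{\d s}{\sqrt s}$ (required for a single $\mathrm{BES}(3)$, cf.\ \eqref{eq: approx1}) by the milder $\frac{\d s}{1+\sqrt s}$: the kernel $\phi_v$ is bounded rather than singular at the origin. Care is also needed with the order of limits — $v$ must be fixed via $\sW(A)<\infty$ before letting $n,m\to\infty$, as $C_v$ blows up with $v$ — and with the completeness of local convergence in $\sW$-measure invoked at the outset.
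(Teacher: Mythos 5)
Your argument is correct and follows essentially the same route as the paper: decompose at the last exit time into a bridge integral, controlled by the It\^o isometry \eqref{eq: Wint Pi}, and a Bessel integral, controlled via the SDE \eqref{eq: 3BES SDE} together with $ R^+[1/X_t]=\sqrt{2/(\pi t)} $, and then convert the $ u $-averaged singular weight $ \frac{\d s}{\sqrt{s}} $ into $ \frac{\d s}{1+\sqrt{s}} $ by the Fubini/Beta-kernel computation, which is precisely Lemma \ref{lem L1} with $ \varphi = 1_{[0,v]} $ in place of the paper's choice $ \varphi(u)=\e^{-u} $. The only difference is bookkeeping: the paper establishes $ L^1 $-convergence under the tilted probability measure $ \mu = \sW^{G} $ with $ G=\e^{-g(X)} $ and transfers it to local convergence in $ \sW $-measure via Proposition \ref{prop: loc in meas} (ii), whereas you verify the definition directly by splitting the $ u $-integral at a level $ v $ chosen from $ \sW(A)<\infty $ and appeal to the completeness asserted in Proposition \ref{prop: loc in meas} (iv); both routes are sound.
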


Since $ L^1(\d s) \subset L^1(\frac{\d s}{1 + \sqrt{s}}) $, 
the following corollary is immediate from Theorem \ref{thm: approx}. 

\begin{Cor}
Let $ f \in L^2(\d s) \cap L^1(\d s) $. 
Let $ \{ f_n \} $ be a sequence of step functions 
such that $ \{ f_n \} $ approximates $ f $ both in $ L^2(\d s) $ and in $ L^1(\d s) $. 
Then it holds that 
\begin{align}
\int_0^{\infty } f_n(s) \d X_s 
\tend{}{n \to \infty } 
\int_0^{\infty } f(s) \d X_s 
\quad \text{locally in $ \sW $-measure} . 
\label{}
\end{align}
\end{Cor}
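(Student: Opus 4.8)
The plan is to check that the hypotheses of Theorem~\ref{thm: approx} are automatically met under the stronger assumptions of the corollary, after which the conclusion is immediate. The one elementary input I would isolate first is the pointwise bound
\[
\frac{1}{1 + \sqrt{s}} \le 1 \qquad (s \ge 0),
\]
which holds simply because $ \sqrt{s} \ge 0 $.

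Next I would use this bound to dominate one norm by the other: for any measurable $ g $,
\[
\int_0^{\infty } |g(s)| \, \frac{\d s}{1 + \sqrt{s}} \;\le\; \int_0^{\infty } |g(s)| \, \d s ,
\]
so that the $ L^1(\frac{\d s}{1 + \sqrt{s}}) $-norm never exceeds the $ L^1(\d s) $-norm; in particular $ L^1(\d s) \subset L^1(\frac{\d s}{1 + \sqrt{s}}) $, as already noted in the statement preceding the corollary. Taking $ g = f $ gives $ f \in L^2(\d s) \cap L^1(\frac{\d s}{1 + \sqrt{s}}) $, and taking $ g = f_n - f $ shows that $ \int_0^{\infty } |f_n(s) - f(s)| \frac{\d s}{1 + \sqrt{s}} \to 0 $ because $ \int_0^{\infty } |f_n(s) - f(s)| \, \d s \to 0 $ by assumption. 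Together with the assumed $ L^2(\d s) $-convergence, this is exactly condition \eqref{eq: approx}.

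Having verified \eqref{eq: approx}, I would simply invoke Theorem~\ref{thm: approx}, which then produces the random variable $ \int_0^{\infty } f(s) \, \d X_s $ and the convergence of $ \int_0^{\infty } f_n(s) \, \d X_s $ to it locally in $ \sW $-measure. Since the whole argument reduces to a one-line domination of norms followed by the application of the theorem, there is no genuine difficulty; the only point requiring verification is that the weaker integrability hypothesis of Theorem~\ref{thm: approx} is implied by the $ L^1(\d s) $ hypothesis here, and this is precisely what the displayed bound supplies.
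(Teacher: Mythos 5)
Your argument is correct and is exactly the paper's own reasoning: the corollary is stated as immediate from Theorem~\ref{thm: approx} via the inclusion $ L^1(\d s) \subset L^1(\frac{\d s}{1 + \sqrt{s}}) $, which is precisely the norm domination $ \frac{1}{1+\sqrt{s}} \le 1 $ you supply. Nothing further is needed.
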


$ \mathbf{3^{\circ}).} $ 
{\bf Decomposition before and after last exit time.} 
Let $ g(X) $ denote the last exit time from 0 for $ X $: 
\begin{align}
g(X) = \sup \{ s \ge 0 : X_s = 0 \} . 
\label{}
\end{align}
For $ u \ge 0 $, let $ \theta_u X $ denote the shifted process: 
$ (\theta_u X)_s = X_{u+s} $, $ s \ge 0 $. 
Then the definition \eqref{eq: def of sW} says 
that the measure $ \sW $ may be described as follows: 
\subitem {\rm (i)} 
$ \displaystyle \sW(g(X) \in \d u) = \frac{\d u}{\sqrt{2 \pi u}} $; 
\subitem {\rm (ii)} 
For (Lebesgue) a.e. $ u \in [0,\infty ) $, 
it holds that, given $ g(X)=u $, 
\subsubitem {\rm (iia)} 
$ (X_s:s \le u) $ is a Brownian bridge from 0 to 0 of length $ u $; 
\subsubitem {\rm (iib)} 
$ ((\theta_u X)_s:s \ge 0) $ is 
a symmetrized 3-dimensional Bessel process. 

\noindent
Based on this path decomposition, we may also decompose the Wiener integral as follows. 

\begin{Thm} \label{thm: approx+}
Let $ f \in L^2(\d s) \cap L^1(\frac{\d s}{1 + \sqrt{s}}) $. 
Then it holds that 
\begin{align}
f(\cdot + u) \in L^1 \rbra{ \frac{\d s}{\sqrt{s}} } 
\quad \text{for a.e. $ u \in [0,\infty ) $} 
\label{}
\end{align}
and that there exists a jointly measurable functional $ (u,X) \mapsto I(f;u,X) $ such that 
\begin{align}
\int_0^{\infty } f(s) \d X_s 
= I(f;g(X),X) 
\quad \text{$ \sW $-a.e.}
\label{}
\end{align}
and that, for a.e. $ u \in [0,\infty ) $, 
\begin{align}
I(f;u,X) = 
\int_0^u f(s) \d X_s + \int_0^{\infty } f(s+u) \d (\theta_u X)_s 
\quad \text{$ (\Pi^{(u)} \bullet R) $-a.e.} 
\label{eq: expans}
\end{align}
In the right hand side of the expression \eqref{eq: expans}, 
the first and the second terms 
are Wiener integrals for the Brownian bridge $ (X_s:s \le u) $ 
and for the symmetrized 3-dimensional Bessel process $ ((\theta_u X)_s:s \ge 0) $, respectively. 
\end{Thm}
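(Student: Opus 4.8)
The plan is to reduce the identity to the path-wise additivity of Wiener integrals of step functions and then to pass to the limit on the two sides separately, using the disintegration of $\sW$ along the last exit time $g$.

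\emph{Integrability.} First I would prove $f(\cdot+u)\in L^1(\frac{\d s}{\sqrt s})$ for a.e. $u$. Substituting $t=u+s$, the assertion is that $\int_u^\infty |f(t)|(t-u)^{-1/2}\,\d t<\infty$, and I would establish this by Fubini over a bounded interval $u\in[0,N]$:
\[
\int_0^N \d u \int_u^\infty \frac{|f(t)|}{\sqrt{t-u}}\,\d t = \int_0^\infty |f(t)| \rbra{ \int_0^{\min(t,N)} \frac{\d u}{\sqrt{t-u}} }\d t .
\]
The inner integral equals $2\sqrt t$ for $t\le N$ and $2(\sqrt t-\sqrt{t-N})\le 2N/\sqrt t$ for $t>N$. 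The contribution from $t\le N$ is finite by local integrability of $f$, while for $t>N$ one has $t^{-1/2}\le(1+N^{-1/2})(1+\sqrt t)^{-1}$, so the contribution is dominated by $\int_N^\infty |f(t)|\frac{\d t}{1+\sqrt t}<\infty$. Hence the double integral is finite, the inner integral is finite for a.e. $u\in[0,N]$, and letting $N\to\infty$ gives the claim. Applied to $f_n-f$, the same computation yields the quantitative bound $\int_0^N \rbra{ \int_0^\infty |(f_n-f)(s+u)|\frac{\d s}{\sqrt s} }\d u\to 0$, which I will use below.

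\emph{Step functions and the definition of $I$.} For a step function $f_n$ and any continuous path, additivity of increments gives, for every $u\ge0$,
\[
\int_0^\infty f_n(s)\,\d X_s = \int_0^u f_n(s)\,\d X_s + \int_0^\infty f_n(s+u)\,\d(\theta_u X)_s =: J_n(u,X) ,
\]
so in particular $\int_0^\infty f_n\,\d X=J_n(g(X),X)$. I then define $I(f;u,X)$ as the same sum with $f$ in place of $f_n$: the first term is the Brownian-bridge Wiener integral (well defined since $f\in L^2$) and the second is the symmetrized Bessel Wiener integral, well defined for a.e. $u$ because $f(\cdot+u)\in L^2\cap L^1(\frac{\d s}{\sqrt s})$ by the Integrability step.

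\emph{Passing to the limit.} Fixing $\{f_n\}$ as in \eqref{eq: approx} and disintegrating $\sW$ along $g$, using that $g(X)=u$ holds $(\Pi^{(u)}\bullet R)$-a.e., I would write, for any $A$ with $\sW(A)<\infty$,
\begin{align*}
& \sW\rbra{ A\cap\cbra{ \absol{J_n(g(X),X)-I(f;g(X),X)}\ge\eps } } \\
& \quad = \int_0^\infty \frac{\d u}{\sqrt{2\pi u}}\, (\Pi^{(u)}\bullet R)\rbra{ A\cap\cbra{ \absol{J_n(u,X)-I(f;u,X)}\ge\eps } } .
\end{align*}
For each fixed $u$, the difference $J_n(u,\cdot)-I(f;u,\cdot)$ is a bridge Wiener integral of $f_n-f$ plus a Bessel Wiener integral of $(f_n-f)(\cdot+u)$; the former tends to $0$ in $\Pi^{(u)}$-probability since $f_n\to f$ in $L^2([0,u])$, and the latter in $R$-probability once $(f_n-f)(\cdot+u)\to0$ both in $L^2$ (true for every $u$) and in $L^1(\frac{\d s}{\sqrt s})$ (true for a.e. $u$ along a subsequence, by the bound above). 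I would therefore argue by subsequences: from any subsequence extract a further one along which $(f_n-f)(\cdot+u)\to 0$ in $L^1(\frac{\d s}{\sqrt s})$ for a.e. $u$; along it the integrand tends to $0$ for a.e. $u$ and is dominated by $(\Pi^{(u)}\bullet R)(A)$, which is $\frac{\d u}{\sqrt{2\pi u}}$-integrable, so the integral tends to $0$ by dominated convergence. Hence the full sequence converges and $J_n(g(X),X)\to I(f;g(X),X)$ locally in $\sW$-measure.

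\emph{Conclusion.} Combining this with Theorem \ref{thm: approx}, which gives $\int_0^\infty f_n\,\d X\to\int_0^\infty f\,\d X$ locally in $\sW$-measure, and with $\int_0^\infty f_n\,\d X=J_n(g(X),X)$, uniqueness of limits locally in $\sW$-measure yields $\int_0^\infty f\,\d X=I(f;g(X),X)$, $\sW$-a.e.; the representation \eqref{eq: expans} for a.e. $u$ is exactly the a.e.-$u$ convergence just established. Joint measurability of $(u,X)\mapsto I(f;u,X)$ I would obtain by realizing $I$ as a limit in probability of the jointly measurable $J_n$ and taking a measurable version. I expect the main obstacle to be this last limit: the $L^1(\frac{\d s}{\sqrt s})$ approximation of $f(\cdot+u)$ holds only for a.e. $u$ and only after extracting a subsequence, so exchanging $n\to\infty$ with the $\frac{\d u}{\sqrt{2\pi u}}$-integration cannot be done by a uniform estimate and must instead go through the subsequence-plus-domination scheme above.
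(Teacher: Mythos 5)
Your proposal is correct, and its skeleton coincides with the paper's: decompose at the last zero $g$, observe that the identity \eqref{eq: expans} is a pathwise triviality for step functions, and pass to the limit using the disintegration $\sW = \int_0^{\infty}\frac{\d u}{\sqrt{2\pi u}}(\Pi^{(u)}\bullet R)$ together with the fact that $g(X)=u$ holds $(\Pi^{(u)}\bullet R)$-a.e. Where you genuinely diverge is in the mechanism of the limit passage. The paper tilts the disintegrated measure by $\varphi(u)=\e^{-u}$ to obtain a finite measure $\mu$ on $[0,\infty)\times\Omega$ and proves a quantitative bound (Proposition \ref{thm: approx--}): $\mu[|J^{(1)}(f)|]\le\|f\|_{L^2}$ and $\mu[|J^{(2)}(f)|]\le C_1(\|f\|_{L^2}+\|f\|_{L^1(\frac{\d s}{1+\sqrt s})})$, the second resting on the norm equivalence of Lemma \ref{lem L1}, which itself invokes the arcsine-integral limit Theorem \ref{lem L2}. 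This makes $f\mapsto J^{(i)}(f)$ bounded linear into $L^1(\mu)$, so the step-function integrals are Cauchy in $L^1(\mu)$ and the full sequence converges at once, with joint measurability of the limit built in. You instead prove only the finiteness half of Lemma \ref{lem L1} by a direct Fubini computation over $u\in[0,N]$ (avoiding Theorem \ref{lem L2} entirely), and replace the uniform estimate by the softer scheme: extract a subsequence along which $\|(f_n-f)(\cdot+u)\|_{L^1(\d s/\sqrt s)}\to0$ for a.e.\ $u$, get convergence in $(\Pi^{(u)}\bullet R)$-probability for a.e.\ $u$, dominate by $(\Pi^{(u)}\bullet R)(A)$, and apply dominated convergence plus the subsequence principle. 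Both routes are valid; the paper's buys the stronger $L^1(\mu)$ convergence and an estimate of independent interest (it is reused in Theorem \ref{thm: conti mod} and explains why $L^1(\frac{\d s}{1+\sqrt s})$ is the natural class), while yours is more elementary and delivers exactly the locally-in-$\sW$-measure statement claimed. The one point you treat briefly --- joint measurability of $(u,X)\mapsto I(f;u,X)$ --- does need the convergence to be read on the product space (i.e.\ as convergence in measure for $\frac{\d u}{\sqrt{2\pi u}}(\Pi^{(u)}\bullet R)(\d X)$ restricted to finite-measure sets, which your domination argument in fact provides), so that an a.e.-convergent subsequence of the jointly measurable $J_n$ can be used to define a measurable version of the limit; spelled out, this closes the gap.
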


$ \mathbf{4^{\circ}).} $ 
{\bf Continuous modification.} 
Let $ 0<T<\infty $ be fixed and write $ \cF_T = \sigma(X_s : s \in [0,T]) $. 
Note that an $ \cF_T $-measurable set is $ \sW $-null 
if and only if it is $ W $-null. 
Although we cannot apply Radon--Nikodym theorem to $ \sW $ 
since it is not $ \sigma $-finite on $ \cF_T $, 
we have the following absolute continuity relationship (see \cite[equation (1.2.45)]{NRY2}): 
\begin{align}
\sW \sbra{ F_T(X) \e^{- g(X)} } 
= W \sbra{ F_T(X) \Lambda_T(X) } 
\label{}
\end{align}
for any bounded $ \cF_T $-measurable functional $ F_T(X) $, 
where $ \Lambda_T(X) $ is given as 
\begin{align}
\Lambda_T(X) 
= |X_T| \exp \rbra{ - g^{(T)}(X) } 
+ \int_0^{\infty } \frac{\d u}{\sqrt{2 \pi u}} \e^{-(T+u)} 
\exp \rbra{ - \frac{X_T^2}{2u} } 
\label{}
\end{align}
and where $ g^{(T)}(X) = \sup \{ s \le T : X_s=0 \} $. 
This shows that, if we assume that $ f \in L^2([0,T],\d s) $, 
there exists, under $ \sW $, a continuous modification 
$ \{ \int_0^t f(s) \d X_s : t \in [0,T] \} $ 
of $ \{ \int_0^{\infty } 1_{[0,t)} f(s) \d X_s : t \in [0,T] \} $. 
It is not, however, immediate from this fact 
whether there exists a jointly measurable functional 
which gives the decomposition of the continuous modification 
before and after the last exit time. 
The following theorem assures existence of such a functional. 

\begin{Thm} \label{thm: conti mod}
Let $ f \in L^2([0,T],\d s) $. Then there exists a jointly measurable functional 
$ (t,u,X) \mapsto I_t(f;u,X) $ such that the following statements hold: 
\subitem {\rm (i)} 
For a.e. $ u \in [0,\infty ) $ 
and for $ (\Pi^{(u)} \bullet R)(\d X) $-a.e. $ X $, 
the function $ t \mapsto I_t(f;u,X) $ is continuous; 
\subitem {\rm (ii)} 
For each $ t \in [0,T] $, 
\begin{align}
\int_0^t f(s) \d X_s = I_t(f;g(X),X) 
\quad \text{$ \sW $-a.e.;} 
\label{eq: cmod1}
\end{align}
\subitem {\rm (iii)} 
For each $ t \in [0,T] $ and for a.e. $ u \in [0,\infty ) $, 
\begin{align}
I_t(f;u,X) = 
\int_0^{u \wedge t} f(s) \d X_s + \int_0^{(t-u) \vee 0} f(s+u) \d (\theta_u X)_s 
\quad \text{$ (\Pi^{(u)} \bullet R) $-a.e.} 
\label{eq: cmod2}
\end{align}
\end{Thm}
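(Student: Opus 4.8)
The plan is to build $ I_t(f;u,X) $ as an almost-sure uniform limit of explicit step-function integrals and then to read off the three properties from the limiting procedure. Fix once and for all a sequence $ \{ f_n \} $ of step functions supported in $ [0,T] $ with $ f_n \to f $ in $ L^2([0,T],\d s) $ rapidly, say $ \sum_n \| f_n - f \|_{L^2} < \infty $. For each $ n $ set
\begin{align}
I^{(n)}_t(u,X) = \int_0^{u \wedge t} f_n(s) \d X_s + \int_0^{(t-u) \vee 0} f_n(s+u) \d (\theta_u X)_s .
\end{align}
Because $ f_n $ is a step function, each term is a finite linear combination of increments of $ X $, so $ (t,u,X) \mapsto I^{(n)}_t(u,X) $ is jointly continuous (with $ X $ in the topology of uniform convergence on compacts); moreover the substitution $ r = s+u $ gives the pathwise identity $ I^{(n)}_t(u,X) = \int_0^t f_n(s) \d X_s $, independently of $ u $. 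Thus at the level of step functions the statement is trivial, and the whole content lies in controlling the passage to the limit $ n \to \infty $ simultaneously in $ t $ and in $ u $.

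The heart of the matter is to show that for a.e. $ u \in [0,\infty) $ and $ (\Pi^{(u)} \bullet R) $-a.e. $ X $ the functions $ t \mapsto I^{(n)}_t(u,X) $ converge uniformly on $ [0,T] $. I would treat the two terms separately, using that the bridge part and the shifted Bessel part are independent under $ \Pi^{(u)} \bullet R $. For the bridge term, representing the length-$ u $ bridge as $ X_s = \beta_s - (s/u)\beta_u $ splits $ \int_0^{u\wedge t}(f_n-f_m)\d X_s $ into an $ L^2 $-martingale handled by Doob's inequality and an explicitly continuous term $ (\beta_u/u)\int_0^{u\wedge t}(f_n-f_m)\d s $; both are bounded in $ L^2(\Pi^{(u)}) $ by $ C\| f_n - f_m \|_{L^2([0,T])}^2 $ with $ C $ \emph{independent} of $ u $. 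For the Bessel term, writing the Bessel process as $ \rho_s = \beta_s + \int_0^s \rho_r^{-1}\d r $ splits it into a martingale part, again controlled uniformly in $ u $, and a drift part $ \int_0^{(t-u)\vee 0}(f_n-f_m)(s+u)\rho_s^{-1}\d s $, continuous in $ t $ and dominated using $ \bE[\rho_s^{-1}] \asymp s^{-1/2} $. The crucial point is that this drift is integrable in $ u $ over the finite window: by Tonelli,
\begin{align}
\int_0^T \d u \int_0^{T-u} |f(s+u)| \frac{\d s}{\sqrt{s}} = \int_0^T |f(r)| \rbra{ \int_0^r \frac{\d u}{\sqrt{r-u}} } \d r = 2 \int_0^T |f(r)| \sqrt{r} \, \d r < \infty ,
\end{align}
which also gives $ f(\cdot+u) \in L^1(\d s/\sqrt{s}) $ for a.e. $ u $ (for $ u \ge T $ the Bessel term vanishes since $ (t-u)\vee 0 = 0 $). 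Since the martingale and bridge bounds are uniform in $ u $, Chebyshev and Borel--Cantelli already give, for every fixed $ u $, the a.e.-$ X $ uniform convergence of those pieces; for the drift, the identity above applied to $ |f_n-f| $ shows $ u \mapsto \| (f_n-f)(\cdot+u) \|_{L^1(\d s/\sqrt{s})} \to 0 $ in $ L^1([0,T],\d u) $, and passing to a rapidly converging subsequence makes $ \sum_n \| (f_{n+1}-f_n)(\cdot+u) \|_{L^1(\d s/\sqrt{s})} $ finite for a.e. $ u $, yielding the a.e.-$ u $ uniform convergence of the drift as well.

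With this in hand I define $ I_t(f;u,X) := \lim_n I^{(n)}_t(u,X) $ on the (jointly measurable) set where the limit exists uniformly in $ t $, and $ 0 $ elsewhere; being a pointwise limit of jointly measurable functions, $ I_t(f;u,X) $ is jointly measurable, giving the measurability requirement, and (i) follows since a uniform limit of functions continuous in $ t $ is continuous. For the decomposition \eqref{eq: cmod2} I pass to the limit in each term of $ I^{(n)} $: the first converges to the continuous modification of the Brownian-bridge Wiener integral $ \int_0^{u\wedge t} f(s)\d X_s $ and the second to that of the symmetrized Bessel Wiener integral $ \int_0^{(t-u)\vee 0} f(s+u)\d(\theta_u X)_s $, the limits being identified with these integrals by the uniqueness of the Wiener-integral limit established in the earlier sections.

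Finally, for (ii) I use the disintegration $ \sW = \int_0^\infty (\d u/\sqrt{2\pi u})(\Pi^{(u)} \bullet R) $. Under $ \Pi^{(u)} \bullet R $ one has $ g(X)=u $ a.e., so the a.e.-$ u $ uniform convergence gives $ I^{(n)}_t(g(X),X) \to I_t(f;g(X),X) $ $ \sW $-a.e. along the chosen subsequence; on the other hand $ I^{(n)}_t(g(X),X) = \int_0^t f_n(s)\d X_s $ converges to the Wiener integral $ \int_0^t f(s)\d X_s $ locally in $ \sW $-measure by Theorem \ref{thm: approx} applied to $ 1_{[0,t)} f \in L^2(\d s) \cap L^1(\frac{\d s}{1+\sqrt{s}}) $. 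Comparing the two limits yields \eqref{eq: cmod1}. I expect the main obstacle to be precisely the simultaneous control in the second paragraph: producing a supremum-in-$ t $ estimate for the Bessel drift that stays integrable in $ u $, since this is where the finite horizon, the $ 1/\sqrt{s} $ singularity of the Bessel drift, and the joint measurability must all be reconciled at once.
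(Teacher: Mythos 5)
Your proposal is correct, but it reaches the continuous modification by a genuinely different route from the paper. You work along a rapidly $L^2$-convergent subsequence of step functions and obtain a.s.\ \emph{uniform-in-$t$} convergence directly: Doob's maximal inequality applied to the martingale parts coming from the bridge representation $B_s-(s/u)B_u$ and from the BES$(3)$ equation \eqref{eq: 3BES SDE}, Borel--Cantelli, and the Tonelli identity $\int_0^T\d u\int_0^{T-u}|f(s+u)|s^{-1/2}\d s=2\int_0^T|f(r)|\sqrt{r}\,\d r$ to tame the drift; the finite horizon is what makes the drift estimate close (for $u\ge T$ the Bessel term vanishes), and your identity is exactly the compact-support specialization of Lemma~\ref{lem L1}, which is how the paper produces the same a.e.-$u$ set $U$ of \eqref{eq: def of U}. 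The paper instead centers the shifted Bessel process \`a la Funaki--Hariya--Yor, applies the convexity inequality \eqref{eq: FHY ineq} with $\psi(x)=x^4$ to get fourth moments, performs the deterministic time change $M(t)=t+\int_0^t|f|^2\d s$, and invokes Kolmogorov's continuity criterion to build continuous modifications $K^{(1)},K^{(3)}$, finally adding back the \emph{deterministic} drift $\sqrt{2/\pi}\int_0^\infty f_t(s+u)s^{-1/2}\d s$ on $U$. Your argument is more elementary and self-contained (no FHY inequality, no Kolmogorov criterion) and yields one exceptional set of $(u,X)$ valid for all $t$ simultaneously, at the cost of being tied to a chosen subsequence, to the explicit SDE form of BES$(3)$, and to keeping the random drift $\int f(s+u)X_s^{-1}\d s$ rather than its mean; the paper's moment-based route is the one that transfers to the Cameron--Martin analysis in \cite{Y2} and avoids subsequences altogether. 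The identifications in your last two paragraphs (matching the uniform limit with the $L^2(\Pi^{(u)})$- and $L^1(R)$-limits termwise for (iii), and comparing the $\sW$-a.e.\ limit with the locally-in-$\sW$-measure limit from Theorem~\ref{thm: approx} for (ii)) are sound, since $L^2([0,T])$-convergence of compactly supported functions implies convergence in $L^1(\frac{\d s}{1+\sqrt{s}})$.
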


$ \mathbf{5^{\circ}).} $ 
This paper is organized as follows. 
In Section \ref{sec: loc}, 
we study several properties 
of convergence a.e. and of convergence locally in measure, 
both considered on a $ \sigma $-finite measure space. 
In Section \ref{sec: prel}, 
we recall Wiener integrals for Brownian motion, Brownian bridge 
and 3-dimensional Bessel process. 
Section \ref{sec: Wint sW} 
is devoted to the proofs of the main theorems.

\section{Convergence locally in $ \sW $-measure} \label{sec: loc}

In this subsection, we only assume that 
$ (\Omega,\cF,\sW) $ is a $ \sigma $-finite measure space. 

\begin{Def} \label{def: loc}
Let $ Z,Z_1,Z_2,\ldots $ be $ \cF $-measurable functionals. 
As $ n \to \infty $, 
we say that {\em $ Z_n \to Z $ locally in $ \sW $-measure} 
if, for any $ \eps>0 $ and any $ A \in \cF $ with $ \sW(A)<\infty $, 
it holds that 
\begin{align}
\sW \rbra{ A \cap \cbra{ |Z_n-Z| \ge \eps } } \to 0 . 
\label{}
\end{align}
\end{Def}

Let us study some properties about this convergence. 
Define 
\begin{align}
L^1_+(\sW) = \cbra{ G:\Omega \to \bR_+, \ \text{$ \cF $-measurable}, \ 
\sW(G=0)=0 , \ \sW[G]<\infty } . 
\label{}
\end{align}
For $ G \in L^1_+(\sW) $, 
we define a probability measure $ \sW^G $ on $ (\Omega,\cF) $ by 
\begin{align}
\sW^G(A) = \frac{\sW[1_A G]}{\sW[G]} 
, \quad A \in \cF . 
\label{eq: sWG}
\end{align}
We obtain the following lemma, 
which only requires the $ \sigma $-finiteness of $ \sW $. 

\begin{Prop} \label{prop: loc in meas}
Let $ Z,Z_1,Z_2,\ldots $ be $ \cF $-measurable functionals. 
\subitem {\rm (i)} 
The following three statements are equivalent: 
\subsubitem {\rm (A1)} 
$ Z_n \to Z $ $ \sW $-a.e. 
\subsubitem {\rm (A2)} 
$ Z_n \to Z $ $ \sW^G $-a.s.~for some $ G \in L^1_+(\sW) $. 
\subsubitem {\rm (A3)} 
$ Z_n \to Z $ $ \sW^G $-a.s.~for any $ G \in L^1_+(\sW) $. 
\subitem {\rm (ii)} 
The following three statements are equivalent: 
\subsubitem  {\rm (B1)} 
$ Z_n \to Z $ locally in $ \sW $-measure. 
\subsubitem  {\rm (B2)} 
$ Z_n \to Z $ in $ \sW^G $-probability for some $ G \in L^1_+(\sW) $. 
\subsubitem  {\rm (B3)} 
$ Z_n \to Z $ in $ \sW^G $-probability for any $ G \in L^1_+(\sW) $. 
\subitem {\rm (iii)} 
$ Z_n \to Z $ locally in $ \sW $-measure if and only if 
one can extract, from an arbitrary subsequence, 
a further subsequence $ \{ n(k):k=1,2,\ldots \} $ along which 
$ Z_{n(k)} \to Z $ $ \sW $-a.e. 
\subitem {\rm (iv)} 
Convergence locally in $ \sW $-measure may be induced by 
some complete separable metric 
on the set of $ \cF_{\infty } $-measurable functionals. 
\end{Prop}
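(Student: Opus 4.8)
\emph{Proof proposal for (iv).} The plan is to transfer the problem to a genuine probability space and then invoke standard facts about convergence in probability. Since $\sW$ is $\sigma$-finite, one can fix a strictly positive $G \in L^1_+(\sW)$; then $\sW^G$ is a probability measure having exactly the same null sets as $\sW$, so that $\sW$-a.e.\ equality and $\sW^G$-a.s.\ equality define the same quotient of $\cF_{\infty}$-measurable functionals. By part (ii) (the equivalence of (B1) and (B3)), $Z_n \to Z$ locally in $\sW$-measure if and only if $Z_n \to Z$ in $\sW^G$-probability. Thus it suffices to produce a complete separable metric inducing convergence in $\sW^G$-probability on the space $L^0$ of $\cF_{\infty}$-measurable functionals modulo $\sW$-a.e.\ equality.

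First I would equip $L^0$ with the standard metric
\begin{align}
d(Y,Z) = \sW^G \sbra{ \absol{Y-Z} \wedge 1 } .
\label{}
\end{align}
It is routine to check that $d$ is a metric (using $G>0$, so that $d(Y,Z)=0$ forces $Y=Z$ $\sW$-a.e.) and that $d(Z_n,Z) \to 0$ is equivalent to $Z_n \to Z$ in $\sW^G$-probability. Completeness is also standard: a $d$-Cauchy sequence admits a subsequence converging $\sW^G$-a.s., and one verifies that the full sequence converges in probability to that a.s.\ limit. Both facts use only that $\sW^G$ is a finite measure, hence hold for an arbitrary $\sigma$-finite $\sW$ and any choice of reference $\sW^G$.

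The one point requiring the specific structure of $\cF_{\infty}$ is \emph{separability}, and this is the crux. Here I would use that $\Omega = C([0,\infty ) \to \bR) $ is a Polish space and $\cF_{\infty}$ is its Borel $\sigma$-algebra, hence countably generated; consequently $L^2(\sW^G)$ is separable. To pass from $L^2$-density to $d$-density I would argue in two steps. Writing $Z^{(N)} = (Z \wedge N) \vee (-N)$, truncation shows that the bounded functionals are $d$-dense in $L^0$, since $\sW^G[\absol{Z-Z^{(N)}} \wedge 1] \to 0$ as $N \to \infty$ by dominated convergence. Every bounded functional lies in $L^2(\sW^G)$ because $\sW^G$ is finite, and $L^2$-convergence implies $d$-convergence on a probability space; hence any countable $L^2$-dense subset of $L^2(\sW^G)$ is $d$-dense in the bounded functionals, and therefore $d$-dense in $L^0$. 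This exhibits a countable $d$-dense set and completes the proof.

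The main obstacle is exactly this separability step: metrizability and completeness are soft and survive for any $\sigma$-finite $\sW$, whereas separability genuinely requires $\cF_{\infty}$ to be countably generated (equivalently, the measure algebra of $\sW^G$ to be separable). This is precisely why the statement is phrased for $\cF_{\infty}$-measurable functionals rather than for functionals measurable with respect to an abstract $\sigma$-algebra $\cF$.
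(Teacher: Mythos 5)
Your proposal addresses only claim (iv), while the paper's own proof covers only claim (ii) (the equivalence of (B1)--(B3), proved via an exhaustion $\{E_m\}$ with $0<\sW(E_m)<\infty$ and the explicit weights $G=\sum_m 2^{-m}\sW(E_m)^{-1}1_{E_m}$); claims (i), (iii) and (iv) are left unproved there, so for (iv) there is nothing in the paper to compare against. On its own terms your argument for (iv) is correct and is the natural route, consistent with the paper's machinery: pick a strictly positive $G\in L^1_+(\sW)$ (available by $\sigma$-finiteness, and then $\sW^G$ has the same null sets as $\sW$), use (B1)\,$\Leftrightarrow$\,(B2)\,$\Leftrightarrow$\,(B3) to identify convergence locally in $\sW$-measure with convergence in $\sW^G$-probability, and metrize the latter by $d(Y,Z)=\sW^G\sbra{\absol{Y-Z}\wedge 1}$ on functionals modulo $\sW$-a.e.\ equality; completeness is the usual Borel--Cantelli subsequence argument, and separability follows because $\cF_{\infty}$ is the Borel $\sigma$-algebra of the Polish space $C([0,\infty)\to\bR)$, hence countably generated, so that $L^2(\sW^G)$ is separable and truncation carries a countable $L^2$-dense set to a $d$-dense set. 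Your closing observation --- that metrizability and completeness hold for any $\sigma$-finite $\sW$ whereas separability genuinely needs $\cF_{\infty}$ to be countably generated, which is why (iv) is phrased for $\cF_{\infty}$ rather than the abstract $\cF$ of Section 2 --- is exactly the right point to isolate.

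Two small caveats. First, because you invoke part (ii), your text does not stand alone as a proof of the full proposition: you should either supply the (B1)--(B3) equivalence (as the paper does, and note that its proof in turn uses (i), which you also do not prove) or state explicitly that you take it as given. Second, the statement speaks of a metric ``on the set of $\cF_{\infty}$-measurable functionals,'' whereas $d$ separates points only after passing to the quotient by $\sW$-a.e.\ equality; you handle this correctly by working in $L^0$, but it is worth saying explicitly that this is the intended reading, since on the raw set of functionals $d$ is only a pseudometric.
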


The reader may not be familiar with Claim (ii), 
so we give its proof for convenience of the reader, 
although it is an elementary argument. 

\begin{proof}[Proof of Claim (ii) of Proposition \ref{prop: loc in meas}]
Note that, since $ \sW $ is $ \sigma $-finite, 
we may take a family $ \{ E_m \} \subset \cF $ such that 
$ 0 < \sW(E_m) < \infty $ and $ \cup_m E_m = \Omega $. 

{[(B3) $ \Rightarrow $ (B1)]} 
Suppose that $ Z_m \to Z $ in $ \sW^G $-probability for any $ G \in L^1_+(\sW) $. 
Let $ A \in \cF $ such that $ \sW(A) < \infty $. 
We define $ G = 1_A + \sum_{m=1}^{\infty } 2^{-m} \sW(E_m)^{-1} 1_{E_m} $. 
Then we have $ G \in L^1_+(\sW) $, 
and consequently, we obtain $ Z_n \to Z $ in $ \sW^G $-probability. 
For any $ \eps>0 $, we have 
\begin{align}
\sW(A \cap \{ |Z_n-Z| \ge \eps \}) 
\le \sW[G] \sW^G(|Z_n-Z| \ge \eps) \to 0 . 
\label{}
\end{align}
Hence we obtain (B1). 

{[(B1) $ \Rightarrow $ (B2)]} 
Suppose that $ Z_n \to Z $ locally in $ \sW $-measure. 
Then, for any $ \eps>0 $, we have 
\begin{align}
\sum_{m=1}^{\infty } \frac{1}{2^m \sW(E_m)} \sW(E_m \cap \{ |Z_n-Z| \ge \eps \}) 
\tend{}{n \to \infty } 0 . 
\label{}
\end{align}
Now we define $ G = \sum_{m=1}^{\infty } 2^{-m} \sW(E_m)^{-1} 1_{E_m} $. 
Then we have $ G \in L^1_+(\sW) $ 
and obtain $ Z_n \to Z $ in $ \sW^G $-probability. 

{[(B2) $ \Rightarrow $ (B3)]} 
Suppose that $ Z_n \to Z $ in $ \sW^G $-probability for $ G \in L^1_+(\sW) $. 
Then, from any subsequence $ k(n) \to \infty $, 
we can extract a further subsequence $ k'(n) \to \infty $ 
along which $ Z_{k'(n)} \to Z $, $ \sW^G $-a.s., 
and consequently by {\rm (i)}, $ Z_{k'(n)} \to Z $, $ \sW $-a.e. 
Hence, by (A), we obtain (B3). 

The proof of Claim (ii) is now complete. 
\end{proof}

\section{Wiener integrals for Brownian motion, Brownian bridge and 3-dimensional Bessel process} \label{sec: prel}

\subsection{Wiener integrals} \label{sec: Wint}

Let $ \Omega = C([0,\infty ) \to \bR) $ 
and $ X=(X_t:t \ge 0) $ stand for the coordinate process. 
For $ 0 < u < \infty $, 
let $ \cS([0,u]) $ denote 
the set of all step functions $ f $ on $ [0,u] $ of the form: 
\begin{align}
f(t) = \sum_{k=1}^n c_k 1_{[t_{k-1},t_k)}(t) 
, \quad t \ge 0 
\label{eq: step func}
\end{align}
with $ n \in \bN $, 
$ c_k \in \bR $ ($ k = 1,\ldots,n $) 
and $ 0=t_0 < t_1 < \cdots < t_n < u $. 
Note that $ \cS([0,u]) $ is dense in $ L^2([0,u],\d s) $; 
indeed, we can take two sequences $ \{ f_n^+ \} $ and $ \{ f_n^- \} $ from $ \cS([0,u]) $ 
such that $ 0 \le f_n^+ \nearrow f \vee 0 $ 
and $ 0 \le f_n^- \nearrow (-f) \vee 0 $, 
and hence we see that $ f_n^+-f_n^- \to f $ in $ L^2([0,u],\d s) $. 
We write 
\begin{align}
\cS = \cbra{ f: \exists \ u \in [0,\infty ) \ \text{such that} \ 
f|_{[u,\infty )}=0 
\ \text{and} \ 
f|_{[0,u)} \in \cS([0,u]) } . 
\label{}
\end{align}
Then we know that $ \cS $ is dense in $ L^2(\d s) $. 

For a function $ f \in \cS $ and a process $ X $, 
we define 
\begin{align}
\int_0^{\infty } f(s) \d X_s =& \sum_{k=1}^n c_k (X_{t_k} - X_{t_{k-1}}) . 
\label{eq: pre Wiener integ}
\end{align}
For a more general function $ f $, 
we will write $ \int_0^{\infty } f(s) \d X_s $ 
for the limit of $ \int_0^{\infty } f_n(s) \d X_s $ in some sense 
with an approximating sequence $ \{ f_n \} \subset \cS $ of $ f $ 
in some functional space, 
and call it {\em Wiener integral of $ f $ for $ X $} 
whenever it is well-defined. 

In the following subsections, 
we give an introductory review on how to construct Wiener integrals for 
Brownian motion, Brownian bridge and 3-dimensional Bessel process, 
and on several facts about them. 

\subsection{Wiener integral for Brownian motion}

Denote $ \cF_{\infty }= \sigma (X_s : s \ge 0) $. 
Let $ W $ denote the Wiener measure, i.e., 
the law on $ (\Omega,\cF_{\infty }) $ of a (one-dimensional, standard) Brownian motion. 
Let us recall Wiener integral for the Brownian motion $ \{ (X_t),W \} $. 

The key to approximation is the following identity: 

\begin{Thm}[It\^o isometry] \label{thm: Itoiso W}
For any $ f \in \cS $, it holds that 
\begin{align}
W \sbra{ \absol{ \int_0^{\infty } f(s) \d X_s }^2 } 
= \int_0^{\infty } |f(s)|^2 \d s . 
\label{eq: Wint W}
\end{align}
\end{Thm}

Although it is widely known, 
we give the proof for completeness of the paper. 

\begin{proof}
Let $ f \in \cS $ be of the form \eqref{eq: step func}. 
Since $ \{ X_{t_k}-X_{t_{k-1}}: k=1,\ldots,n \} $ is an orthogonal system in $ L^2(W) $ 
and since $ W[(X_{t_k}-X_{t_{k-1}})^2] = t_k - t_{k-1} $, we have 
\begin{align}
W \sbra{ \absol{ \sum_{k=1}^n c_k (X_{t_k} - X_{t_{k-1}}) }^2 } 
= \sum_{k=1}^n c_k^2 \rbra{ t_k - t_{k-1} } . 
\label{}
\end{align}
This proves \eqref{eq: Wint W}. 
\end{proof}

The It\^o isometry \eqref{eq: Wint W} shows that 
if $ \{ f_n \} \subset \cS $ approximates $ f $ in $ L^2(\d s) $, then 
the Wiener integral $ \int_0^{\infty } f_n(s) \d X_s $ forms a Cauchy sequence in $ L^2(W) $ 
and hence it converges in $ L^2(W) $ 
where the limit random variable 
does not depend on the choice of the approximation sequence $ \{ f_n \} $. 

\begin{Def}
For $ f \in L^2(\d s) $, the Wiener integral $ \int_0^{\infty } f(s) \d X_s $ 
is defined as the $ L^2(W) $-limit of $ \int_0^{\infty } f_n(s) \d X_s $ 
for some sequence $ \{ f_n \} \subset \cS $ which approximates $ f $ in $ L^2(\d s) $. 
\end{Def}

\begin{Thm}
For any $ f \in L^2(\d s) $, the Wiener integral $ \int_0^{\infty } f(s) \d X_s $ 
satisfies the It\^o isometry \eqref{eq: Wint W} 
and is a centered Gaussian variable with variance $ \| f \|^2_{L^2(\d s)} $. 
\end{Thm}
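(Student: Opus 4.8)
The plan is to deduce both assertions from the corresponding statements for step functions by passing to the limit along an approximating sequence $ \{ f_n \} \subset \cS $ with $ f_n \to f $ in $ L^2(\d s) $; such a sequence exists because $ \cS $ is dense in $ L^2(\d s) $, and by the definition of the Wiener integral $ \int_0^{\infty } f_n(s) \d X_s \to \int_0^{\infty } f(s) \d X_s $ in $ L^2(W) $. The whole argument is an extension-by-continuity, and the only substantive point is the preservation of Gaussianity in the limit.

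First I would verify the It\^o isometry for $ f $. Since the $ L^2(W) $-norm is continuous under $ L^2(W) $-convergence, Theorem \ref{thm: Itoiso W} applied to each $ f_n $ together with $ f_n \to f $ in $ L^2(\d s) $ yields
\begin{align}
W \sbra{ \absol{ \int_0^{\infty } f(s) \d X_s }^2 }
= \lim_{n \to \infty } W \sbra{ \absol{ \int_0^{\infty } f_n(s) \d X_s }^2 }
= \lim_{n \to \infty } \int_0^{\infty } |f_n(s)|^2 \d s
= \int_0^{\infty } |f(s)|^2 \d s ,
\end{align}
which is \eqref{eq: Wint W}. Next I would record the relevant properties of the approximating integrals. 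For $ f_n \in \cS $ of the form \eqref{eq: step func}, the integral $ \int_0^{\infty } f_n(s) \d X_s = \sum_{k=1}^n c_k (X_{t_k}-X_{t_{k-1}}) $ is a finite linear combination of the increments $ X_{t_k}-X_{t_{k-1}} $, which under $ W $ are independent centered Gaussian variables; hence $ \int_0^{\infty } f_n(s) \d X_s $ is itself a centered Gaussian variable, with characteristic function $ \exp(-\tfrac{\lambda^2}{2} \int_0^{\infty } |f_n(s)|^2 \d s) $. Centeredness of the limit then follows at once, since $ L^2(W) $-convergence implies $ L^1(W) $-convergence and so the (vanishing) means pass to the limit.

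The one step requiring a genuine limiting argument — and hence the main obstacle — is showing that the Gaussian law is inherited by $ \int_0^{\infty } f(s) \d X_s $. For this I would use that $ L^2(W) $-convergence implies convergence in distribution, so the characteristic functions converge pointwise in $ \lambda $; combined with the isometry just established to identify the exponent, this gives
\begin{align}
W \sbra{ \e^{\im \lambda \int_0^{\infty } f(s) \d X_s} }
= \lim_{n \to \infty } \exp \rbra{ -\frac{\lambda^2}{2} \int_0^{\infty } |f_n(s)|^2 \d s }
= \exp \rbra{ -\frac{\lambda^2}{2} \int_0^{\infty } |f(s)|^2 \d s }
\end{align}
for every $ \lambda \in \bR $. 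This is the characteristic function of a centered Gaussian variable with variance $ \| f \|^2_{L^2(\d s)} $, so $ \int_0^{\infty } f(s) \d X_s $ has the asserted law, completing the proof.
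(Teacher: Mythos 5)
Your proposal is correct and follows essentially the same route as the paper: the isometry is obtained by passing Theorem \ref{thm: Itoiso W} to the $L^2(W)$-limit, and the Gaussian law is identified via convergence of characteristic functions. You have simply written out in full the argument the paper dismisses as ``obvious via characteristic functions.''
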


\begin{proof}
The former assertion is immediate from Theorem \ref{thm: Itoiso W}. 
The latter is obvious via characteristic functions. 
\end{proof}

\subsection{Wiener integral for Brownian bridge}

Let $ 0<u<\infty $ be fixed. 
We write $ \Omega^{(u)} = C([0,u] \to \bR) $ 
and write $ X^{(u)}=(X_s: 0 \le s \le u) $ for the coordinate process. 
(We sometimes use the same symbol $ X^{(u)} $ to mean the part $ (X_s: 0 \le s \le u) $ 
of the coordinate process $ (X_s: s \ge 0) $ of $ \Omega = C([0,\infty ) \to \bR) $.) 
Denote $ \cF_u = \sigma(X_s : s \le u) $. 
We denote by $ \Pi^{(u)} $ the law on $ (\Omega^{(u)},\cF_u) $ of the {\em Brownian bridge}: 
\begin{align}
\Pi^{(u)}(\cdot) = W(\cdot|X_u=0) . 
\label{}
\end{align}
The process $ X^{(u)} $ under $ \Pi^{(u)} $ 
is a continuous centered Gaussian process with covariance 
$ \Pi^{(u)}[X_s X_t] = s-st/u $ for $ 0 \le s \le t \le u $. 
As a realization of $ \{ (X_s),\Pi^{(u)} \} $, we may take 
\begin{align}
\rbra{ B_s - \frac{s}{u} B_u : s \in [0,u] } 
\label{eq: realization}
\end{align}
where $ \{ (B_t),\Pi^{(u)} \} $ is a Brownian motion. 

Let us recall Wiener integral for Brownian bridge $ \{ X^{(u)},\Pi^{(u)} \} $ 
for $ 0<u<\infty $. 
(See also \cite{GW}.) 
For $ f \in L^2([0,u],\d s) $, we define $ \pi_u f \in L^2([0,u],\d s) $ by 
\begin{align}
(\pi_u f)(s) = f(s) - \frac{1}{u} \int_0^u f(t) \d t 
, \quad s \in [0,u] . 
\label{eq: piuf}
\end{align}
Note that $ \int_0^u \pi_u f(s) \d s = 0 $ and that 
\begin{align}
\| \pi_u f \|^2_{L^2([0,u],\d s)} = \| f \|^2_{L^2([0,u],\d s)} 
- \frac{1}{u} \rbra{ \int_0^u f(t) \d t }^2 . 
\label{}
\end{align}
In particular, we have 
\begin{align}
\| \pi_u f \|_{L^2([0,u],\d s)} \le \| f \|_{L^2([0,u],\d s)} . 
\label{eq: piuf le f}
\end{align}

\begin{Thm}[It\^o isometry for Brownian bridge] \label{thm: Itoiso Pi}
For any $ f \in \cS([0,u]) $, it holds that 
\begin{align}
\Pi^{(u)} \sbra{ \rbra{ \int_0^u f(s) \d X_s }^2 } = \int_0^u |\pi_uf(s)|^2 \d s . 
\label{eq: Wint Pi}
\end{align}
\end{Thm}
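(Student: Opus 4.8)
The plan is to reduce the bridge isometry to the Brownian It\^o isometry already proved in Theorem \ref{thm: Itoiso W}, using the explicit realization \eqref{eq: realization} of the bridge. First I would fix $ f \in \cS([0,u]) $ of the form \eqref{eq: step func} and expand the integral according to \eqref{eq: pre Wiener integ}, so that $ \int_0^u f(s)\, \d X_s = \sum_{k=1}^n c_k(X_{t_k} - X_{t_{k-1}}) $. Inserting the realization $ X_s = B_s - (s/u)B_u $, where $ \{(B_t), \Pi^{(u)}\} $ is a Brownian motion, every increment splits as $ X_{t_k} - X_{t_{k-1}} = (B_{t_k} - B_{t_{k-1}}) - \frac{t_k - t_{k-1}}{u}B_u $.

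Next I would collect the two resulting sums. Since $ \sum_k c_k(B_{t_k} - B_{t_{k-1}}) = \int_0^u f(s)\, \d B_s $ and $ \sum_k c_k(t_k - t_{k-1}) = \int_0^u f(s)\, \d s $, and since $ B_u = \int_0^u 1\, \d B_s $, linearity of the Brownian Wiener integral yields
\[
\int_0^u f(s)\, \d X_s = \int_0^u f(s)\, \d B_s - \frac{1}{u}\left(\int_0^u f(s)\, \d s\right)\int_0^u 1\, \d B_s = \int_0^u (\pi_u f)(s)\, \d B_s,
\]
with $ \pi_u f $ as in \eqref{eq: piuf}. The one point deserving care is that $ \pi_u f = f - \frac{1}{u}\int_0^u f $ does not vanish on $ [t_n, u) $, so it is not literally of the form \eqref{eq: step func}; however, regarded as a step function that vanishes outside $ [0,u) $, it belongs to the global class $ \cS $, which is all that Theorem \ref{thm: Itoiso W} requires.

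Finally I would apply the Brownian It\^o isometry to the Brownian motion $ B $ (whose law under $ \Pi^{(u)} $ is Wiener measure) and to the integrand $ \pi_u f $:
\[
\Pi^{(u)}\left[\left(\int_0^u f(s)\, \d X_s\right)^2\right] = \Pi^{(u)}\left[\left(\int_0^u (\pi_u f)(s)\, \d B_s\right)^2\right] = \int_0^u |(\pi_u f)(s)|^2\, \d s,
\]
which is exactly \eqref{eq: Wint Pi}. There is no genuine obstacle: because $ f $ is a step function all manipulations are finite-sum algebra, and the only subtlety is the bookkeeping that rewrites $ \int_0^u f\, \d X $ as a single Brownian integral against $ \pi_u f $. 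A direct alternative would be to expand $ \Pi^{(u)}[(\sum_k c_k(X_{t_k} - X_{t_{k-1}}))^2] $ as a double sum and evaluate it from the bridge covariance $ \Pi^{(u)}[X_s X_t] = s - st/u $; this also works but is more tedious, so I would prefer the realization argument above.
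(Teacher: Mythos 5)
Your proof is correct and follows essentially the same route as the paper: both use the realization \eqref{eq: realization} to establish the identity $ \int_0^u f(s)\,\d X_s = \int_0^u (\pi_u f)(s)\,\d B_s $ (the paper's \eqref{eq: identity BB}) and then invoke the Brownian It\^o isometry \eqref{eq: Wint W}. Your extra remark that $ \pi_u f $ need not vanish on $ [t_n,u) $ but still lies in $ \cS $ is a reasonable piece of bookkeeping that the paper leaves implicit.
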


Although it is elementary, 
we give the proof for completeness of the paper. 

\begin{proof}
Let us adopt the realization \eqref{eq: realization} of the Brownian bridge. 
Taking the Wiener integrals for $ (B_t) $ of both sides of \eqref{eq: piuf}, we obtain 
\begin{align}
\int_0^u f(s) \d B_s - \frac{B_u}{u} \int_0^u f(s) \d s 
= \int_0^u (\pi_u f)(s) \d B_s . 
\label{}
\end{align}
This shows that 
\begin{align}
\int_0^u f(s) \d X_s = \int_0^u (\pi_u f)(s) \d B_s 
\label{eq: identity BB}
\end{align}
for all $ f \in \cS([0,u]) $. 
Thus we obtain \eqref{eq: Wint Pi} 
from the It\^o isometry \eqref{eq: Wint W} for Wiener integral for $ (B_t) $. 
\end{proof}

The It\^o isometry \eqref{eq: Wint Pi} and inequality \eqref{eq: piuf le f} show that 
if $ \{ f_n \} \subset \cS([0,u]) $ approximates $ f $ in $ L^2([0,u],\d s) $, then 
the Wiener integral $ \int_0^{\infty } f_n(s) \d X_s $ 
forms a Cauchy sequence in $ L^2(\Pi^{(u)}) $ 
and hence it converges in $ L^2(\Pi^{(u)}) $ 
where the limit random variable 
does not depend on the choice of the approximation sequence $ \{ f_n \} $. 

\begin{Def}
For $ f \in L^2([0,u],\d s) $, the Wiener integral $ \int_0^{\infty } f(s) \d X_s $ 
is defined as the $ L^2(\Pi^{(u)}) $-limit of $ \int_0^{\infty } f_n(s) \d X_s $ 
for some sequence $ \{ f_n \} \subset \cS $ which approximates $ f $ in $ L^2([0,u],\d s) $. 
\end{Def}

\begin{Thm}
For any $ f \in L^2([0,u],\d s) $, the Wiener integral $ \int_0^{\infty } f(s) \d X_s $ 
satisfies the It\^o isometry \eqref{eq: Wint Pi} 
and identity \eqref{eq: identity BB}, 
and is a centered Gaussian variable with variance $ \| \pi_u f \|^2_{L^2([0,u],\d s)} $. 
\end{Thm}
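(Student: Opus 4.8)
The plan is to extend every assertion from step functions to general $ f \in L^2([0,u],\d s) $ by a standard density--continuity argument, using the identity \eqref{eq: identity BB} as the main vehicle. The key point is that once \eqref{eq: identity BB} is established for arbitrary $ f $, the It\^o isometry, the Gaussianity, and the variance formula all follow at once by transporting the corresponding facts for the Brownian motion $ (B_t) $ appearing in the realization \eqref{eq: realization}.

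First I would fix $ f \in L^2([0,u],\d s) $ and, as in the definition, choose a sequence $ \{ f_n \} \subset \cS([0,u]) $ with $ f_n \to f $ in $ L^2([0,u],\d s) $, so that by construction $ \int_0^u f_n \d X \to \int_0^u f \d X $ in $ L^2(\Pi^{(u)}) $. The crucial auxiliary observation is that $ \pi_u $ is a bounded linear operator on $ L^2([0,u],\d s) $: indeed, inequality \eqref{eq: piuf le f} gives operator norm at most $ 1 $. Hence $ \pi_u f_n \to \pi_u f $ in $ L^2([0,u],\d s) $, and by the It\^o isometry \eqref{eq: Wint W} for the Brownian motion $ (B_t) $, the integrals $ \int_0^u (\pi_u f_n) \d B $ form a Cauchy sequence converging in $ L^2(\Pi^{(u)}) $ to $ \int_0^u (\pi_u f) \d B $.

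Next I would pass \eqref{eq: identity BB} to the limit. For each $ n $ one has $ \int_0^u f_n \d X = \int_0^u (\pi_u f_n) \d B $; since both sides converge in $ L^2(\Pi^{(u)}) $ and $ L^2 $-limits are unique up to $ \Pi^{(u)} $-null sets, the identity $ \int_0^u f \d X = \int_0^u (\pi_u f) \d B $ holds for $ f $ as well. From here the remaining claims are immediate. The It\^o isometry \eqref{eq: Wint Pi} follows since $ \Pi^{(u)}[ ( \int_0^u f \d X )^2 ] = \Pi^{(u)}[ ( \int_0^u \pi_u f \d B )^2 ] = \| \pi_u f \|^2_{L^2([0,u],\d s)} $ by \eqref{eq: Wint W}. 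Moreover $ \int_0^u (\pi_u f) \d B $ is a Wiener integral of the $ L^2 $-function $ \pi_u f $ for the Brownian motion $ (B_t) $, hence a centered Gaussian variable with variance $ \| \pi_u f \|^2_{L^2([0,u],\d s)} $, which gives the last assertion.

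There is essentially no hard part here; the argument is a routine continuity extension and the only step requiring a word of care is well-definedness modulo null sets. Both the limit random variable $ \int_0^u f \d X $ and the identity \eqref{eq: identity BB} are statements only up to $ \Pi^{(u)} $-null sets, so I would note explicitly that uniqueness of the $ L^2(\Pi^{(u)}) $-limit guarantees both that the value is independent of the chosen approximating sequence $ \{ f_n \} $ and that the almost-sure identity is unambiguous.
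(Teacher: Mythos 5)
Your proposal is correct and follows essentially the same route as the paper: a standard density argument passing the step-function identities of Theorem \ref{thm: Itoiso Pi} to the $L^2(\Pi^{(u)})$-limit, using \eqref{eq: piuf le f} to control $\pi_u$. The only cosmetic difference is that you obtain Gaussianity by transporting through \eqref{eq: identity BB} to the Brownian-motion Wiener integral of $\pi_u f$, whereas the paper invokes characteristic functions directly; both are valid and equivalent in substance.
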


\begin{proof}
The former assertion is immediate from Theorem \ref{thm: Itoiso Pi}. 
The latter is obvious via characteristic functions. 
\end{proof}

\subsection{Wiener integral for 3-dimensional Bessel process via stochastic differential equation} 
\label{sec: Wint 3Bes}

Recall that $ R^+ $ is the law on $ (\Omega,\cF_{\infty }) $ 
of the 3-dimensional Bessel process starting from $ 0 $. 
It is well-known that $ R^+ $ is the law of the process $ (\sqrt{Z_t}) $ 
where $ (Z_t) $ is the unique strong solution 
to the stochastic differential equation 
\begin{align}
\d Z_t = 2 \sqrt{|Z_t|} \d \beta_t + 3 \d t 
, \quad Z_0 = 0 
\label{}
\end{align}
with $ (\beta_t) $ a Brownian motion. 
Under $ R^+ $, the process $ X $ satisfies 
\begin{align}
\d X_t = \d B_t + \frac{1}{X_t} \d t 
, \quad X_0 = 0 
\label{eq: 3BES SDE}
\end{align}
with a Brownian motion $ \{ (B_t),R^+ \} $. 

We may define Wiener integral for 3-dimensional Bessel process $ \{ (X_t),R^+ \} $ 
via the stochastic differential equation \eqref{eq: 3BES SDE}. 
Noting that 
\begin{align}
R^+ \sbra{ \frac{1}{X_t} } = \sqrt{ \frac{2}{\pi t} } , 
\label{}
\end{align}
we may give the following definition. 

\begin{Def}
For $ f \in L^2(\d s) \cap L^1(\frac{\d s}{\sqrt{s}}) $, 
we define 
\begin{align}
\int_0^{\infty } f(s) \d X_s 
= \int_0^{\infty } f(s) \d B_s + \int_0^{\infty } \frac{f(s)}{X_s} \d s . 
\label{eq: first def}
\end{align}
\end{Def}

Approximation by step functions is given as follows. 

\begin{Lem} \label{lem: approx of Wint 3Bes}
Let $ f \in L^2(\d s) \cap L^1(\frac{\d s}{\sqrt{s}}) $. 
Suppose that a sequence $ \{ f_n \} \subset \cS $ 
approximates $ f $ both in $ L^2(\d s) $ and in $ L^1(\frac{\d s}{\sqrt{s}}) $ 
(see \eqref{eq: approx1}). 
Then it holds that 
\begin{align}
\int_0^{\infty } f_n(s) \d X_s \tend{}{n \to \infty } \int_0^{\infty } f(s) \d X_s 
\quad \text{in $ L^1(R^+) $} . 
\label{}
\end{align}
\end{Lem}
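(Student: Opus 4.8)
The plan is to exploit the defining formula \eqref{eq: first def}, which splits the Wiener integral into a stochastic part against the Brownian motion $ (B_t) $ driving the SDE \eqref{eq: 3BES SDE} and a drift part involving $ 1/X_s $, and to estimate each part separately in $ L^1(R^+) $.

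First I would check that the two definitions of $ \int_0^{\infty} f_n(s) \d X_s $ agree on step functions. For $ f_n \in \cS $ of the form \eqref{eq: step func}, integrating the SDE \eqref{eq: 3BES SDE} over each interval $ [t_{k-1},t_k) $ gives $ X_{t_k}-X_{t_{k-1}} = (B_{t_k}-B_{t_{k-1}}) + \int_{t_{k-1}}^{t_k} \frac{\d s}{X_s} $, so that the elementary definition \eqref{eq: pre Wiener integ} coincides with the right-hand side of \eqref{eq: first def}. Hence, by linearity, the difference can be written as
\begin{align}
\int_0^{\infty} f_n(s) \d X_s - \int_0^{\infty} f(s) \d X_s
= \int_0^{\infty} (f_n(s)-f(s)) \d B_s
+ \int_0^{\infty} \frac{f_n(s)-f(s)}{X_s} \d s .
\end{align}

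Next I would bound the two terms. For the stochastic term, Jensen's inequality together with the It\^o isometry \eqref{eq: Wint W} for $ (B_t) $ gives
\begin{align}
R^+ \sbra{ \absol{ \int_0^{\infty} (f_n(s)-f(s)) \d B_s } }
\le \rbra{ \int_0^{\infty} |f_n(s)-f(s)|^2 \d s }^{1/2} ,
\end{align}
which tends to $ 0 $ by the $ L^2(\d s) $-approximation. For the drift term, since $ X_s \ge 0 $ holds $ R^+ $-a.e., Tonelli's theorem and the identity $ R^+[1/X_s] = \sqrt{2/(\pi s)} $ yield
\begin{align}
R^+ \sbra{ \absol{ \int_0^{\infty} \frac{f_n(s)-f(s)}{X_s} \d s } }
\le \sqrt{\frac{2}{\pi}} \int_0^{\infty} |f_n(s)-f(s)| \frac{\d s}{\sqrt{s}} ,
\end{align}
which tends to $ 0 $ by the $ L^1(\frac{\d s}{\sqrt{s}}) $-approximation. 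Adding the two bounds gives convergence in $ L^1(R^+) $.

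The argument is essentially routine once the splitting is in place; the only point requiring a little care is the use of Tonelli's theorem, but this is unproblematic because the integrand $ |f_n-f|/X_s $ is nonnegative and its expectation is finite precisely by the hypothesis $ f, f_n \in L^1(\frac{\d s}{\sqrt{s}}) $, which also guarantees that the drift integral is well-defined $ R^+ $-a.e. I do not expect any genuine obstacle here; the content of the lemma lies entirely in having chosen the correct pair of function-space norms, so that the two halves of the decomposition are controlled by the $ L^2(\d s) $-norm and the $ L^1(\frac{\d s}{\sqrt{s}}) $-norm respectively.
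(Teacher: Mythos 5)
Your proposal is correct and follows essentially the same route as the paper: split the integral via \eqref{eq: first def}, control the martingale part through the It\^o isometry (hence in $L^2(R^+)$, a fortiori in $L^1(R^+)$), and control the drift part through $R^+[1/X_s]=\sqrt{2/(\pi s)}$ and Tonelli. The only addition is your explicit check that \eqref{eq: pre Wiener integ} and \eqref{eq: first def} agree on step functions, which the paper leaves implicit.
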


\begin{proof}
Since $ f_n \to f $ in $ L^2(\d s) $, we have 
\begin{align}
\int_0^{\infty } f_n(s) \d B_s 
\tend{}{n \to \infty } 
\int_0^{\infty } f(s) \d B_s 
\quad \text{in $ L^2(R^+) $}, 
\label{}
\end{align}
and consequently, the convergence occurs also in $ L^1(R^+) $. 
Since 
\begin{align}
R^+ \sbra{ \int_0^{\infty } \frac{|f_n(s)-f(s)|}{X_s} \d s } 
= \sqrt{\frac{2}{\pi}} \int_0^{\infty } |f_n(s)-f(s)| \frac{\d s}{\sqrt{s}} 
\tend{}{n \to \infty } 0 , 
\label{}
\end{align}
we have 
\begin{align}
\int_0^{\infty } \frac{f_n(s)}{X_s} \d s 
\tend{}{n \to \infty } 
\int_0^{\infty } \frac{f(s)}{X_s} \d s 
\quad \text{in $ L^1(R^+) $}. 
\label{}
\end{align}
The proof is now complete. 
\end{proof}

\subsection{Wiener integral for 3-dimensional Bessel process via centered Bessel process}

There is another way of constructing Wiener integral for the 3-dimensional Bessel process, 
which is due to Funaki--Hariya--Yor (\cite{FHY2}; 
see also \cite{FHY1}, \cite{FHHY1} and \cite{FHHY2}). 
Note that this method will play a crucial role in \cite{Y2}. 

We define 
\begin{align}
\hat{X}_t = X_t - R^+[X_t] 
\label{}
\end{align}
and we call $ \{ (\hat{X}_t),R^+ \} $ {\em the centered 3-dimensional Bessel process}. 
For $ f \in \cS $ of the form \eqref{eq: step func}, 
the Wiener integral has already been defined as 
\begin{align}
\int_0^{\infty } f(s) \d \hat{X}_s = \sum_{k=1}^n c_k (\hat{X}_{t_k} - \hat{X}_{t_{k-1}}) . 
\label{}
\end{align}
We remark that neither the 3-dimensional Bessel process nor the centered one is Gaussian. 
So we cannot expect an isometry to hold 
similar to the It\^o isometries \eqref{eq: Wint W} and \eqref{eq: Wint Pi}. 
Funaki--Hariya--Yor \cite{FHY2} obtained the following remarkable inequality 
analogous to the It\^o isometries. 

\begin{Thm}[\cite{FHY2}] \label{thm: FHY2}
For any $ f \in \cS $ and 
any non-negative convex function $ \psi $ on $ \bR $, it holds that 
\begin{align}
R^+ \sbra{ \psi \rbra{ \int_0^{\infty } f(s) \d \hat{X}_s } } 
\le W \sbra{ \psi \rbra{ \int_0^{\infty } f(s) \d X_s } } . 
\label{eq: FHY ineq}
\end{align}
In particular, taking $ \psi(x)=x^2 $, one has 
\begin{align}
R^+ \sbra{ \absol{ \int_0^{\infty } f(s) \d \hat{X}_s }^2 } 
\le \int_0^{\infty } |f(s)|^2 \d s . 
\label{eq: Ito ineq}
\end{align}
\end{Thm}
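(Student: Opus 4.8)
This is the Funaki--Hariya--Yor inequality, so I would reconstruct the argument of \cite{FHY2}. First I would reduce to a convenient class of test functions: since both sides of \eqref{eq: FHY ineq} are centered, it suffices to verify the inequality for $ \psi $ smooth with bounded first and second derivatives (a general non-negative convex $ \psi $ is then recovered by mollification and monotone approximation; one may even reduce to $ \psi(x)=(x-a)^+ $, $ a \in \bR $). Fix $ f \in \cS $ supported on $ [0,T] $ and recall from \eqref{eq: first def} and \eqref{eq: 3BES SDE} the decomposition
\[
\int_0^{\infty } f(s) \d \hat{X}_s = \int_0^{\infty } f(s) \d B_s + \int_0^{\infty } f(s) \varphi(s) \d s , \qquad \varphi(s) = \frac{1}{X_s} - R^+\sbra{ \frac{1}{X_s} } ,
\]
where $ (B_t) $ is the driving Brownian motion under $ R^+ $. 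Since $ B $ is a Brownian motion under $ R^+ $, the first term is a centered Gaussian with variance $ \| f \|_{L^2(\d s)}^2 $ and therefore has, under $ R^+ $, the same law as $ \int_0^{\infty } f \d X $ has under $ W $. Hence the right-hand side of \eqref{eq: FHY ineq} equals $ R^+[\psi(\int_0^{\infty } f \d B)] $, and \eqref{eq: FHY ineq} becomes the assertion that the centered Bessel integral is dominated, in the convex order, by its Gaussian (martingale) part.

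The plan is to interpolate between the two sides in the time variable. For $ t \in [0,T] $ set $ u(t,y) = R^+[\psi(y + \int_t^T f \d B)] $, which solves $ \partial_t u + \frac{1}{2} f(t)^2 \partial_{yy} u = 0 $ with $ u(T,\cdot)=\psi $; because convolution with a Gaussian preserves convexity, $ u(t,\cdot) $ is convex, so $ y \mapsto \partial_y u(t,y) $ is nondecreasing. Writing $ Y_t = \int_0^t f \d \hat{X}_s $, It\^o's formula applied to $ u(t,Y_t) $ together with the backward equation cancels the $ \partial_t u + \frac12 f^2 \partial_{yy} u $ term and leaves
\[
\d\, u(t,Y_t) = f(t) \varphi(t) \partial_y u(t,Y_t) \d t + f(t) \partial_y u(t,Y_t) \d B_t .
\]
Taking $ R^+ $-expectations (the stochastic integral is a true martingale for $ f \in \cS $ and $ \psi $ as above), and using $ Y_0=0 $, $ Y_T = \int_0^{\infty } f \d \hat{X} $ and $ u(0,0)=R^+[\psi(\int_0^{\infty } f \d B)] $, one arrives at the remainder formula
\[
R^+\sbra{ \psi\rbra{ \int_0^{\infty } f \d \hat{X} } } - R^+\sbra{ \psi\rbra{ \int_0^{\infty } f \d B } } = \int_0^T f(t) \, \Cov_{R^+}\!\rbra{ \frac{1}{X_t} , \partial_y u(t,Y_t) } \d t ,
\]
the covariance arising precisely because $ \varphi(t) $ is $ R^+ $-centered. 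Everything thus reduces to showing that this integrand is nonpositive.

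The main obstacle is the sign of $ f(t)\,\Cov_{R^+}(1/X_t,\partial_y u(t,Y_t)) $. Since $ \partial_y u(t,\cdot) $ is nondecreasing, this covariance carries the sign of the association between $ 1/X_t $ and $ Y_t = \int_0^t f \d \hat{X} $, and $ x \mapsto 1/x $ is decreasing; the heuristic is that the $ 1/X $ drift is a restoring force that damps fluctuations. When $ f \ge 0 $ on $ [0,t] $ this works cleanly, and the scalar case $ f \equiv c $ confirms it via $ \Cov_{R^+}(1/X_t,X_t) \le 0 $. The genuine difficulty, and the heart of the Funaki--Hariya--Yor argument, is to control this covariance for a \emph{sign-changing} $ f $, where $ Y_t $ is no longer a monotone functional of the Bessel path; here one cannot use the variance bound alone but must exploit the detailed joint law of $ (X_t,Y_t) $ --- for instance through the Markov property of $ X $ and an association (FKG-type) estimate for the Bessel semigroup, or through a one-step conditional convex-order comparison of the increments. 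Once nonpositivity is established for a.e.\ $ t $, integrating the remainder formula gives \eqref{eq: FHY ineq}; the quadratic case \eqref{eq: Ito ineq} drops out directly, since there $ u(t,y)=y^2+\int_t^T f^2 $, so $ \partial_y u(t,Y_t)=2Y_t $ and the integrand is the explicitly computable $ 2 f(t)\,\Cov_{R^+}(1/X_t,Y_t) $.
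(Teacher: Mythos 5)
The paper offers no proof of this statement at all: it is quoted from Funaki--Hariya--Yor, and the text immediately following it says ``For the proof of this Theorem, see \cite[Prop.4.1]{FHY2}.'' So there is no in-paper argument to compare yours against; your reconstruction has to stand on its own, and it does not, for a reason you yourself identify. The preparatory steps are sound: reducing to smooth convex $\psi$, observing that $\int_0^\infty f(s)\,\d B_s$ under $R^+$ has the law of $\int_0^\infty f(s)\,\d X_s$ under $W$, and deriving via It\^o's formula and the backward equation the remainder identity
\[
R^+\sbra{\psi\rbra{\textstyle\int_0^\infty f\,\d \hat X}}-R^+\sbra{\psi\rbra{\textstyle\int_0^\infty f\,\d B}}
=\int_0^T f(t)\,\Cov_{R^+}\rbra{\tfrac{1}{X_t},\,\partial_y u(t,Y_t)}\d t .
\]
But this identity merely relocates the entire content of \eqref{eq: FHY ineq} into the claim that the integrand is nonpositive, and that claim is never established. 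You only verify it when $\psi(x)=x^2$ and $f$ is constant (where $\Cov_{R^+}(1/X_t,X_t)=1-4/\pi<0$), and you assert that the case $f\ge 0$ ``works cleanly'' while conceding that sign-changing $f$ is ``the heart of the argument.'' Even the nonnegative case is not clean: for a step function $f=\sum_k c_k 1_{[t_{k-1},t_k)}$ with $c_k\ge 0$, the functional $Y_t=\sum_k c_k(\hat X_{t_k\wedge t}-\hat X_{t_{k-1}\wedge t})$ is coordinatewise monotone in $(X_{t_1},\dots,X_{t_n})$ only when the $c_k$ are nonincreasing, so the FKG/association route you gesture at does not apply to a general nonnegative step function, let alone a sign-changing one. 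Since the theorem is stated for arbitrary $f\in\cS$ and arbitrary nonnegative convex $\psi$, the decisive step is missing and the proposal is a strategy outline rather than a proof.

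A secondary remark: this is precisely why the inequality is called ``remarkable'' in the literature --- it is a genuine convex-order domination for a non-Gaussian, non-martingale process, and the known proof in \cite{FHY2} does not proceed by signing a covariance along a PDE interpolation. If you want a complete argument you should either reproduce the scheme of \cite[Prop.~4.1]{FHY2} or supply an actual proof of the nonpositivity of $f(t)\,\Cov_{R^+}(1/X_t,\partial_y u(t,Y_t))$; as written, the latter is an open assertion equivalent in difficulty to the theorem itself.
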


For the proof of this Theorem, see \cite[Prop.4.1]{FHY2}. 

The inequality \eqref{eq: Ito ineq} shows that, 
if $ \{ f_n \} \subset \cS $ approximates $ f $ in $ L^2(\d s) $, then 
the Wiener integral $ \int_0^{\infty } f_n(s) \d \hat{X}_s $ forms a Cauchy sequence 
in $ L^2(R^+) $ and hence it converges in $ L^2(R^+) $ 
where the limit random variable 
does not depend on the choice of the approximation sequence $ \{ f_n \} $. 

\begin{Def}
For $ f \in L^2(\d s) $, the Wiener integral $ \int_0^{\infty } f(s) \d \hat{X}_s $ 
is defined as the $ L^2(R^+) $-limit of $ \int_0^{\infty } f_n(s) \d \hat{X}_s $ 
for some sequence $ \{ f_n \} \subset \cS $ which approximates $ f $ in $ L^2(\d s) $. 
\end{Def}

\begin{Thm} \label{thm: FHY}
For any $ f \in L^2(\d s) $ and 
any non-negative convex function $ \psi $ on $ \bR $, 
the inequality \eqref{eq: FHY ineq} remains valid, 
and so does \eqref{eq: Ito ineq}, in particular. 
\end{Thm}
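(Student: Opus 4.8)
The plan is to transfer the step-function inequality of Theorem \ref{thm: FHY2} to general $ f \in L^2(\d s) $ by a double approximation: first in the integrand $ f $, then in the convex function $ \psi $. Fix $ f \in L^2(\d s) $ and choose a sequence $ \{ f_n \} \subset \cS $ with $ f_n \to f $ in $ L^2(\d s) $. By the defining constructions of the two Wiener integrals, we then have $ \int_0^{\infty } f_n(s) \d \hat{X}_s \to \int_0^{\infty } f(s) \d \hat{X}_s $ in $ L^2(R^+) $ and $ \int_0^{\infty } f_n(s) \d X_s \to \int_0^{\infty } f(s) \d X_s $ in $ L^2(W) $; write $ A_n,A $ and $ B_n,B $ for these four random variables, respectively.

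The difficulty is that a general non-negative convex $ \psi $ may grow arbitrarily fast, so one cannot directly pass to the limit in $ n $ inside $ R^+[\psi(A_n)] $ and $ W[\psi(B_n)] $ without first controlling integrability. To bypass this, I would replace $ \psi $ by its $ k $-Lipschitz convex minorants, defined by the inf-convolution
\begin{align}
\psi_k(x) = \inf_{y \in \bR} \cbra{ \psi(y) + k |x-y| } , \quad k = 1,2,\ldots .
\end{align}
Each $ \psi_k $ is convex, non-negative (since $ \psi \ge 0 $ forces both summands to be non-negative), $ k $-Lipschitz, and $ \psi_k \uparrow \psi $ pointwise as $ k \to \infty $, the monotone limit being $ \psi $ because a finite convex function on $ \bR $ is continuous.

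For each fixed $ k $, Theorem \ref{thm: FHY2} applied to the step function $ f_n $ and the convex function $ \psi_k $ gives $ R^+[\psi_k(A_n)] \le W[\psi_k(B_n)] $. Since $ \psi_k $ is $ k $-Lipschitz and $ R^+,W $ are probability measures, $ |R^+[\psi_k(A_n)] - R^+[\psi_k(A)]| \le k\, R^+[|A_n-A|] \le k \| A_n-A \|_{L^2(R^+)} \to 0 $, and likewise on the $ W $-side; letting $ n \to \infty $ yields $ R^+[\psi_k(A)] \le W[\psi_k(B)] $ for every $ k $. Finally I would let $ k \to \infty $ and invoke the monotone convergence theorem on both sides, using $ \psi_k \uparrow \psi $ together with $ \psi_k \ge 0 $, to conclude $ R^+[\psi(A)] \le W[\psi(B)] $, which is exactly \eqref{eq: FHY ineq} for $ f \in L^2(\d s) $. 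Taking $ \psi(x)=x^2 $, and recalling that $ B $ is a centered Gaussian with variance $ \| f \|^2_{L^2(\d s)} $, recovers \eqref{eq: Ito ineq} as the announced special case.

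The main obstacle is precisely the unbounded growth of $ \psi $, which blocks a naive limit in $ n $; the Lipschitz-minorant reduction is the device that converts the problem into one where the $ L^2 $-convergence of the integrals suffices for each truncation level, after which monotone convergence in $ k $ restores the full convex function at no extra cost.
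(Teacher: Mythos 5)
Your proof is correct, but it takes a genuinely different route from the paper's. The paper exploits the Gaussianity of the Brownian-side integral: it chooses the approximating sequence so that $ \sigma_n = \| f_n \|_{L^2(\d s)} $ increases to $ \| f \|_{L^2(\d s)} $, deduces $ W \sbra{ \psi \rbra{ \int_0^{\infty } f_n(s) \d X_s } } \to W \sbra{ \psi \rbra{ \int_0^{\infty } f(s) \d X_s } } $ (possibly $ = +\infty $) by monotone convergence applied to the explicit Gaussian integral, and then controls the Bessel side by Fatou's lemma along an a.s.\ convergent subsequence. Your inf-convolution device instead tames $ \psi $ rather than the integrand: each $ k $-Lipschitz convex minorant $ \psi_k $ makes the passage $ n \to \infty $ a routine $ L^1 $-continuity estimate on both sides simultaneously (the needed facts about $ \psi_k $ --- convexity, non-negativity, the Lipschitz bound, and $ \psi_k \uparrow \psi $ by lower semicontinuity of the finite convex $ \psi $ --- all check out), and monotone convergence in $ k $ then restores $ \psi $. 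What your approach buys is generality and symmetry: it nowhere uses that either Wiener integral is Gaussian, only that both converge in $ L^1 $ of the respective probability measures, so it would transfer the step-function inequality of Theorem \ref{thm: FHY2} between any two such processes; it also avoids both the monotone choice of $ \sigma_n $ and the subsequence extraction. What the paper's argument buys is brevity, since the Gaussian computation does all the work on the majorizing side in one line. Both treatments correctly accommodate a possibly infinite right-hand side, and your derivation of \eqref{eq: Ito ineq} from the case $ \psi(x)=x^2 $ matches the paper's.
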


\begin{proof}
We may take an approximation sequence $ \{ f_n \} $ of $ f $ in $ L^2(\d s) $ so that 
$ \sigma_n := \| f_n \|_{L^2(\d s)} $ converges increasingly 
to $ \sigma := \| f \|_{L^2(\d s)} $. 
By the monotone convergence theorem, we see that 
\begin{align}
\int_{-\infty }^{\infty } \psi(x) \exp \rbra{- \frac{x^2}{2 \sigma_n}} \d x 
\tend{}{n \to \infty } 
\int_{-\infty }^{\infty } \psi(x) \exp \rbra{- \frac{x^2}{2 \sigma}} \d x . 
\label{}
\end{align}
(The limit may possibly be infinite.) 
Since the Wiener integral for Brownian motion is Gaussian, we obtain 
\begin{align}
W \sbra{ \psi \rbra{ \int_0^{\infty } f_n(s) \d X_s } } 
\tend{}{n \to \infty } 
W \sbra{ \psi \rbra{ \int_0^{\infty } f(s) \d X_s } } . 
\label{}
\end{align}
Therefore we obtain \eqref{eq: FHY ineq} by Fatou's lemma and by Theorem \ref{thm: FHY2}. 
\end{proof}

Note that 
\begin{align}
R^+[X_t] 
= \sqrt{ \frac{2}{\pi} } \int_0^t \frac{\d s}{\sqrt{s}} 
, \quad t \ge 0 . 
\label{}
\end{align}

\begin{Lem}
Let $ f \in L^2(\d s) \cap L^1(\frac{\d s}{\sqrt{s}}) $. 
Then, under $ R^+ $, it holds that 
\begin{align}
\int_0^{\infty } f(s) \d X_s 
= \int_0^{\infty } f(s) \d \hat{X}_s 
+ \sqrt{\frac{2}{\pi}} \int_0^{\infty } f(s) \frac{\d s}{\sqrt{s}} 
\label{eq: Wint uncentered}
\end{align}
where 
the Wiener integral in the left hand side 
has been defined in \eqref{eq: first def}. 
\end{Lem}

\begin{proof}
It is obvious that the equality \eqref{eq: Wint uncentered} holds 
in the case where $ f $ is a step function. 
In the general case, we obtain \eqref{eq: Wint uncentered} 
by approximating $ f $ by step functions both in $ L^2(\d s) $ and in $ L^1(\frac{\d s}{\sqrt{s}}) $. 
\end{proof}

\begin{Rem} \label{rem: uncentered}
We cannot dispense with the assumption that $ f \in L^1(\frac{\d s}{\sqrt{s}}) $; 
in fact, if 
\begin{align}
f(s) = \frac{1}{\sqrt{s} \log s} 1_{(2,\infty )}(s) , 
\label{}
\end{align}
then 
$ f \in L^2(\d s) $ so that $ \int_0^{\infty } f(s) \d \hat{X}_s $ exists, 
while the integral $ \int_0^{\infty } f(s) \frac{\d s}{\sqrt{s}} $ diverges. 
See \cite{JY} for a very similar discussion. 
\end{Rem}

\section{Wiener integral under the $ \sigma $-finite measure} \label{sec: Wint sW}

\subsection{A limit theorem for last exit time}

Note that the last exit time from 0 up to time $ t $, denoted by 
$ g^{(t)}(X) = \sup \{ s \le t : X_s=0 \} $, 
has, under $ W $, the arcsine law: 
\begin{align}
W(g^{(t)}(X) \in \d u) = \frac{\d u}{\pi \sqrt{u(t-u)}} . 
\label{}
\end{align}
We need the following limit theorem. 

\begin{Thm} \label{lem L2}
Let $ \varphi $ be a non-negative non-increasing function on $ (0,\infty ) $ such that 
$ \varphi \in L^1(\frac{\d u}{\sqrt{u}}) $. 
Then it holds that 
\begin{align}
\lim_{t \to \infty } \sqrt{t} \int_0^t \varphi(u) \frac{\d u}{\sqrt{u(t-u)}} 
= \int_0^{\infty } \varphi(u) \frac{\d u}{\sqrt{u}} ; 
\label{eq: limit theorem}
\end{align}
in other words, 
\begin{align}
\lim_{t \to \infty } \sqrt{\frac{\pi t}{2}} W \sbra{ \varphi(g^{(t)}(X)) } 
= \sW \sbra{\varphi(g(X))} . 
\label{}
\end{align}
\end{Thm}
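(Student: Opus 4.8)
The plan is to prove the analytic identity \eqref{eq: limit theorem} directly; the probabilistic reformulation then follows at once, since substituting the arcsine law $W(g^{(t)}(X)\in\d u)=\frac{\d u}{\pi\sqrt{u(t-u)}}$ and the law (i), $\sW(g(X)\in\d u)=\frac{\d u}{\sqrt{2\pi u}}$, turns the two sides of the displayed assertion into $\frac{1}{\sqrt{2\pi}}$ times the two sides of \eqref{eq: limit theorem}. To study the left-hand side I would first absorb the $\sqrt{t}$ into the kernel, writing
\begin{align}
\sqrt{t}\int_0^t \varphi(u)\frac{\d u}{\sqrt{u(t-u)}}
= \int_0^t \frac{\varphi(u)}{\sqrt{u}}\cdot\frac{1}{\sqrt{1-u/t}}\,\d u ,
\label{}
\end{align}
so that for each fixed $u$ the integrand converges to $\varphi(u)/\sqrt{u}$ as $t\to\infty$, the only difficulty being the singular factor $(1-u/t)^{-1/2}$ as $u$ approaches $t$.

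The key preliminary step is the decay estimate $\sqrt{u}\,\varphi(u)\to 0$ as $u\to\infty$, which uses both hypotheses on $\varphi$. Since $\varphi$ is non-increasing, for every $u>0$
\[
\int_u^{2u}\frac{\varphi(s)}{\sqrt{s}}\,\d s \ge \varphi(2u)\int_u^{2u}\frac{\d s}{\sqrt{s}} = 2(\sqrt{2}-1)\,\sqrt{u}\,\varphi(2u) ,
\]
and the left-hand side tends to $0$ as the tail of the convergent integral $\int_0^\infty \varphi(s)\,\d s/\sqrt{s}$, whence $\sqrt{v}\,\varphi(v)\to 0$.

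With this in hand I would split the integral in \eqref{eq: limit theorem} at $u=t/2$. On $[0,t/2]$ the factor $(1-u/t)^{-1/2}$ is bounded by $\sqrt{2}$, so the integrand is dominated by the fixed integrable function $\sqrt{2}\,\varphi(u)/\sqrt{u}$ and tends pointwise to $\varphi(u)/\sqrt{u}$; writing the piece as $\int_0^\infty 1_{\{u\le t/2\}}\,\frac{\varphi(u)}{\sqrt{u}}\,(1-u/t)^{-1/2}\,\d u$, dominated convergence gives $\int_0^{t/2}\to\int_0^\infty \varphi(u)\,\d u/\sqrt{u}$. On the boundary layer $[t/2,t]$ I would use monotonicity to bound $\varphi(u)\le\varphi(t/2)$ and compute $\int_{t/2}^t \frac{\d u}{\sqrt{1-u/t}}=\sqrt{2}\,t$ explicitly, obtaining
\[
\int_{t/2}^t \frac{\varphi(u)}{\sqrt{u}}\cdot\frac{1}{\sqrt{1-u/t}}\,\d u \le \frac{\varphi(t/2)}{\sqrt{t/2}}\cdot\sqrt{2}\,t = 2\sqrt{t}\,\varphi(t/2) ,
\]
which tends to $0$ by the decay estimate, since $\sqrt{t}\,\varphi(t/2)=\sqrt{2}\,\sqrt{t/2}\,\varphi(t/2)\to 0$. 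Adding the two pieces yields \eqref{eq: limit theorem}.

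The main obstacle is precisely this endpoint singularity at $u=t$: naive dominated convergence fails near the endpoint because $(1-u/t)^{-1/2}$ is not integrable up to $u=t$ uniformly in $t$. The monotonicity hypothesis is exactly what tames it, through the estimate $\sqrt{u}\,\varphi(u)\to 0$, which forces the boundary layer $[t/2,t]$ to contribute negligibly while the bulk $[0,t/2]$ is handled by routine dominated convergence.
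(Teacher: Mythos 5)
Your proof is correct and follows essentially the same route as the paper: both hinge on the decay estimate $ \sqrt{t}\,\varphi(t) \to 0 $ derived from monotonicity, combined with a split of the integration domain that isolates the singularity at $ u=t $. The only (equally valid) differences are that the paper first reduces matters to showing $ \int_0^t \varphi(u)\,\frac{\d u}{\sqrt{t-u}} \to 0 $ via the inequality $ \sqrt{a}-\sqrt{b} \le \sqrt{a-b} $ and then splits at $ \eps t $ with $ \eps \to 0 $ at the end, whereas you split at $ t/2 $ and handle the bulk by dominated convergence.
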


\begin{Rem}
We have a counterexample (see \cite[Example 6.1]{YYY}) 
if we omit the non-increasingness assumption. 
Theorem \ref{lem L2} is a special case of \cite[Lemma 6.3]{YYY}, 
which plays an important role in penalisation problems. 
\end{Rem}

\begin{proof}[Proof of Theorem \ref{lem L2}.]
It suffices to show that 
\begin{align}
\lim_{t \to \infty } 
\int_0^t \varphi(u) \rbra{ \sqrt{ \frac{t}{t-u} } - 1 } \frac{\d u}{\sqrt{u}} = 0 . 
\label{}
\end{align}
Since $ \sqrt{a}-\sqrt{b} \le \sqrt{a-b} $ for $ a \ge b \ge 0 $, 
it suffices to show that 
\begin{align}
\lim_{t \to \infty } 
\int_0^t \varphi(u) \frac{\d u}{\sqrt{t-u}} = 0 . 
\label{}
\end{align}

We note that 
\begin{align}
\int_{t/4}^t \varphi(u) \frac{\d u}{\sqrt{u}} 
\ge \varphi(t) \int_{t/4}^t \frac{\d u}{\sqrt{u}} 
= \sqrt{t} \varphi(t) . 
\label{}
\end{align}
Hence it follows from the assumption $ \varphi \in L^1(\frac{\d u}{\sqrt{u}}) $ that 
$ \sqrt{t} \varphi(t) \to 0 $ as $ t \to \infty $. 

Let $ 0<\eps<1 $. First, we have 
\begin{align}
\int_{\eps t}^t \varphi(u) \frac{\d u}{\sqrt{t-u}} 
\le& \varphi(\eps t) \int_{\eps t}^t \frac{\d u}{\sqrt{t-u}} 
\label{} \\
=& 2 \sqrt{\frac{1-\eps}{\eps}} \cbra{ \sqrt{\eps t} \varphi(\eps t) } 
\tend{}{t \to \infty } 0 . 
\label{}
\end{align}
Second, we have 
\begin{align}
\int_0^{\eps t} \varphi(u) \frac{\d u}{\sqrt{t-u}} 
\le& 
\int_0^{\eps t} \varphi(u) \frac{\d u}{\sqrt{(u/\eps)-u}} 
\label{} \\
=& 
\frac{1}{\sqrt{(1/\eps)-1}} \int_0^{\eps t} \varphi(u) \frac{\d u}{\sqrt{u}} . 
\label{}
\end{align}
Thus we obtain 
\begin{align}
\limsup_{t \to \infty } 
\int_0^t \varphi(u) \frac{\d u}{\sqrt{t-u}} 
\le \frac{1}{\sqrt{(1/\eps)-1}} \int_0^{\infty } \varphi(u) \frac{\d u}{\sqrt{u}} . 
\label{}
\end{align}
Letting $ \eps \to 0+ $, we obtain the desired result. 
\end{proof}

We shall utilize the following lemma. 

\begin{Lem} \label{lem L1}
Let $ \varphi $ be a non-negative non-increasing function on $ (0,\infty ) $ such that 
$ \varphi \in L^1(\frac{\d u}{\sqrt{u}}) $. 
Suppose that $ \varphi(0+) $ is finite. 
Then there exist two constants $ c_0 $ and $ C_0 $ such that, 
for any Borel function $ f $, it holds that 
\begin{align}
c_0 \int_0^{\infty } |f(s)| \frac{\d s}{1 + \sqrt{s}} 
\le 
\int_0^{\infty } \frac{\d u}{\sqrt{u}} \varphi(u) 
\int_0^{\infty } |f(s+u)| \frac{\d s}{\sqrt{s}} 
\le C_0 \int_0^{\infty } |f(s)| \frac{\d s}{1 + \sqrt{s}} . 
\label{}
\end{align}
In other words, the norm $ \| \cdot \|_{\varphi} $ defined by 
\begin{align}
\| f \|_{\varphi} = \int_0^{\infty } \frac{\d u}{\sqrt{u}} \varphi(u) 
\int_0^{\infty } |f(s+u)| \frac{\d s}{\sqrt{s}} 
\label{}
\end{align}
is equivalent to the norm $ \| \cdot \|_{L^1(\frac{\d s}{1+\sqrt{s}})} $. 
In particular, if $ f \in L^1(\frac{\d s}{1 + \sqrt{s}}) $, then it holds that 
\begin{align}
\int_0^{\infty } |f(s+u)| \frac{\d s}{\sqrt{s}} < \infty 
\quad \text{for a.e. $ u \in [0,\infty ) $} . 
\label{}
\end{align}
\end{Lem}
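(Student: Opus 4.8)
The plan is to collapse the iterated integral into a single weighted integral of $ |f| $ and reduce the whole statement to a pointwise estimate on the resulting kernel. Substituting $ t=s+u $ in the inner integral and applying Tonelli's theorem (all integrands are non-negative) gives
\begin{align}
\| f \|_{\varphi}
= \int_0^{\infty } |f(t)| K(t) \, \d t ,
\qquad
K(t) := \int_0^t \frac{\varphi(u)}{\sqrt{u(t-u)}} \, \d u .
\end{align}
Since this identity holds for every non-negative Borel $ f $, it suffices to prove the pointwise bounds $ \frac{c_0}{1+\sqrt{t}} \le K(t) \le \frac{C_0}{1+\sqrt{t}} $ for all $ t>0 $: multiplying by $ |f(t)| $ and integrating then yields the asserted inequalities with the same constants. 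So everything reduces to estimating $ K $.

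For the upper bound I would argue in two ways. Rescaling $ u=tv $ turns $ K(t) $ into $ \int_0^1 \varphi(tv)\{v(1-v)\}^{-1/2}\,\d v $, and since $ \varphi(tv) \le \varphi(0+) $ this yields $ K(t) \le \pi \varphi(0+) $, controlling $ K $ for small $ t $. For large $ t $ I would split the $ u $-integral at $ t/2 $. On $ (0,t/2) $ one bounds $ t-u \ge t/2 $ to get a contribution $ \le \sqrt{2/t}\,M $, where $ M := \int_0^{\infty } \varphi(u)\frac{\d u}{\sqrt u} $. On $ (t/2,t) $ one uses monotonicity ($ \varphi(u)\le \varphi(t/2) $) together with the elementary uniform estimate $ \sqrt u\,\varphi(u) \le \int_{u/4}^u \varphi(r)\frac{\d r}{\sqrt r} \le M $ (valid for all $ u $ by monotonicity), giving a contribution $ \le 2\varphi(t/2) \le 2\sqrt 2\, M/\sqrt t $. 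Writing $ K(t)(1+\sqrt t) = K(t) + \sqrt t\,K(t) $ and feeding $ K(t)\le\pi\varphi(0+) $ into the first term and $ \sqrt t\,K(t)\le 3\sqrt2\,M $ into the second produces a uniform bound $ K(t)(1+\sqrt t)\le C_0 $.

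For the lower bound the point is that all of the mass must be extracted from $ \varphi $ near the origin, since $ \varphi $ may vanish for large $ u $. As $ \varphi(0+)>0 $ there is $ u_0>0 $ with $ b:=\varphi(u_0)>0 $, hence $ \varphi\ge b $ on $ (0,u_0) $. Restricting the $ u $-integral to $ (0,\min(t/2,u_0)) $ and using $ t-u\le t $ there gives $ K(t) \ge \frac{b}{\sqrt t}\cdot 2\sqrt{\min(t/2,u_0)} $. Distinguishing $ t\le 2u_0 $ (where this is $ \ge \sqrt 2\,b $, so $ K(t)(1+\sqrt t)\ge K(t)\ge\sqrt2\,b $) from $ t>2u_0 $ (where it is $ \ge 2b\sqrt{u_0}/\sqrt t $, so $ K(t)(1+\sqrt t)\ge \sqrt t\,K(t)\ge 2b\sqrt{u_0} $) produces a uniform bound $ K(t)(1+\sqrt t) \ge c_0>0 $. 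Combining the two bounds establishes the kernel comparison and hence the norm equivalence.

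Finally, the ``in particular'' assertion follows at once: if $ f\in L^1(\frac{\d s}{1+\sqrt s}) $ then $ \|f\|_{\varphi}\le C_0\|f\|_{L^1(\frac{\d s}{1+\sqrt s})}<\infty $, so by Tonelli the integrand $ \frac{\varphi(u)}{\sqrt u}\int_0^{\infty } |f(s+u)|\frac{\d s}{\sqrt s} $ is finite for a.e.\ $ u $; applying the (already established) equivalence with a strictly positive admissible weight, e.g.\ $ \varphi(u)=\e^{-u} $ which satisfies all the hypotheses, removes the restriction to $ \{\varphi>0\} $ and gives finiteness of the inner integral for a.e.\ $ u\in[0,\infty ) $. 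The main obstacle is the lower bound: because $ \varphi $ need not be bounded away from $ 0 $ and may even be compactly supported, one cannot estimate $ K $ crudely and must localise the $ u $-integral to the region near $ 0 $ where $ \varphi $ is genuinely positive. The auxiliary inequality $ \sqrt u\,\varphi(u)\le M $ is the other delicate ingredient, since it is what tames the near-diagonal part $ u\approx t $ of the kernel in the upper bound.
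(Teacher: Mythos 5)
Your proof is correct, and it follows the paper's overall strategy (Tonelli reduces everything to two-sided bounds on the kernel $K(t)=\int_0^t \varphi(u)\{u(t-u)\}^{-1/2}\,\d u$) but estimates the kernel differently. The paper simply invokes its Theorem 4.1 to get the asymptotics $K(s)\sim s^{-1/2}\int_0^\infty\varphi\,\frac{\d u}{\sqrt u}$ as $s\to\infty$ and $K(s)\to\pi\varphi(0+)$ as $s\to0+$, and concludes; strictly speaking that still requires knowing $K$ is bounded above and below on compact subsets of $(0,\infty)$, which the paper leaves implicit. You instead prove explicit uniform bounds $c_0\le (1+\sqrt t)K(t)\le C_0$ valid for all $t>0$, using the same elementary ingredients that appear inside the paper's proof of the limit theorem (the split of the $u$-integral and the inequality $\sqrt u\,\varphi(u)\le\int_0^\infty\varphi\,\frac{\d r}{\sqrt r}$), so your argument is self-contained and closes that small gap. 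Your treatment of the ``in particular'' clause is also more careful than the paper's: you correctly note that a.e.\ finiteness of the inner integral on all of $[0,\infty)$ only follows once one specialises to a strictly positive admissible weight such as $\varphi(u)=\e^{-u}$. The one point you share with the paper is the tacit assumption $\varphi\not\equiv0$ (equivalently $\varphi(0+)>0$), without which the lower bound is false; you at least make this explicit when you choose $u_0$ with $\varphi(u_0)>0$.
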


\begin{proof}
Changing the order of integration, we have 
\begin{align}
\| f \|_{\varphi} 
=& \int_0^{\infty } \frac{\d u}{\sqrt{u}} \varphi(u) 
\int_u^{\infty } |f(s)| \frac{\d s}{\sqrt{s-u}} 
\label{} \\
=& \int_0^{\infty } \d s |f(s)| \int_0^s \varphi(u) \frac{\d u}{\sqrt{u(s-u)}} . 
\label{}
\end{align}
Applying Theorem \ref{lem L2}, we see that 
\begin{align}
\int_0^s \varphi(u) \frac{\d u}{\sqrt{u(s-u)}} 
\sim 
\begin{cases}
(1/\sqrt{s}) \int_0^{\infty } \varphi(u) \frac{\d u}{\sqrt{u}} 
& \text{as $ s \to \infty $}, \\
\varphi(0+) \pi & \text{as $ s \to 0+ $}. 
\end{cases}
\label{}
\end{align}
Thus we obtain the desired result. 
\end{proof}

\subsection{Approximation theorems}

Let $ \varphi $ be a non-negative non-increasing function on $ (0,\infty ) $ such that 
$ \varphi \in L^1(\frac{\d u}{\sqrt{u}}) $ 
and that $ \varphi(0+) $ is finite. 
We are mainly interested in the measure $ \frac{\d u}{\sqrt{2 \pi u}} \Pi^{(u)} \bullet R $, 
but it will be more convenient to work with the finite measure 
\begin{align}
\mu_{\varphi}(\d u \times \d X) 
= \frac{\d u}{C_{\varphi} \sqrt{u}} \varphi(u) \rbra{ \Pi^{(u)} \bullet R }(\d X) 
\label{}
\end{align}
where $ C_{\varphi} = \int_0^{\infty } \varphi(u) \frac{\d u}{\sqrt{u}} $. 
Note that 
\begin{align}
\int F(u,X) \mu_{\varphi}(\d u \times \d X) 
= \frac{1}{\sW[\varphi(g(X))]} \int_{\Omega} \sW(\d X) \sbra{ \varphi(g(X)) F(g(X),X) } 
\label{}
\end{align}
for any non-negative measurable function $ F $ on $ [0,\infty ) \times \Omega $. 
For simplicity, we may choose $ \varphi(u) = \e^{-u} $ 
and write $ \mu $ for $ \mu_{\varphi} $. 

For $ f \in \cS $ and the coordinate process $ X $, we define 
\begin{align}
J^{(1)}(f;u,X^{(u)}) = \int_0^u f(s) \d X_s 
\quad \text{and} \quad 
J^{(2)}(f;u,\theta_u X) = \int_0^{\infty } f(s+u) \d (\theta_u X)_s . 
\label{}
\end{align}

\begin{Prop} \label{thm: approx--}
Let $ f \in \cS $. Then it holds that 
\begin{align}
\mu \sbra{ |J^{(1)}(f;u,X^{(u)})| } \le \| f \|_{L^2(\d s)} 
\label{eq: mu prop part1}
\end{align}
and that there exists an absolute constant $ C_1 $ such that 
\begin{align}
\mu \sbra{ |J^{(2)}(f;u,\theta_u X)| } 
\le C_1 \cbra{ \| f \|_{L^2(\d s)} 
+ \| f \|_{L^1(\frac{\d s}{1 + \sqrt{s}})} } . 
\label{eq: mu prop part2}
\end{align}
\end{Prop}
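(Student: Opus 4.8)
The plan is to disintegrate $ \mu $ over the last-exit coordinate $ u $ and to estimate the two conditional contributions separately. Under $ \mu $, conditionally on the value $ u $, the pre-$ u $ part $ (X_s:s \le u) $ is a Brownian bridge of length $ u $ while the post-$ u $ part $ (\theta_u X)_s $ is a symmetrized 3-dimensional Bessel process with law $ R $. Writing $ \mu\sbra{ |J^{(i)}| } = \frac{1}{C_{\varphi}} \int_0^{\infty } \frac{\d u}{\sqrt{u}} \e^{-u}\, \bE^{(i)}_u\sbra{ |J^{(i)}| } $, where $ \bE^{(1)}_u $ denotes expectation under $ \Pi^{(u)} $ and $ \bE^{(2)}_u $ expectation under $ R $, the task reduces to bounding each conditional expectation and integrating against the finite weight $ \frac{1}{C_{\varphi}} \frac{\d u}{\sqrt{u}} \e^{-u} $, which has total mass $ 1 $.

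For \eqref{eq: mu prop part1} I would use that, for $ f \in \cS $, the bridge integral $ J^{(1)}(f;u,X^{(u)}) = \int_0^u f(s) \d X_s $ is, under $ \Pi^{(u)} $, a centered Gaussian with variance $ \| \pi_u f \|^2_{L^2([0,u],\d s)} $ by Theorem \ref{thm: Itoiso Pi}. Since a centered Gaussian of variance $ \sigma^2 $ has $ L^1 $-norm $ \sqrt{2/\pi}\,\sigma \le \sigma $, and since $ \| \pi_u f \|_{L^2([0,u])} \le \| f \|_{L^2([0,u])} \le \| f \|_{L^2(\d s)} $ by \eqref{eq: piuf le f}, the conditional expectation is bounded by $ \| f \|_{L^2(\d s)} $ uniformly in $ u $. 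Integrating this constant bound against the probability weight yields \eqref{eq: mu prop part1} immediately, in fact with the sharper constant $ \sqrt{2/\pi} $.

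For \eqref{eq: mu prop part2} I would first observe that, since $ R = \frac{R^+ + R^-}{2} $ and the step-function integral merely changes sign under $ X \mapsto -X $, one has $ R\sbra{ |J^{(2)}| } = R^+\sbra{ |\int_0^{\infty } f(s+u) \d X_s| } $. I then apply the decomposition \eqref{eq: Wint uncentered} to $ g = f(\cdot + u) $, again a step function, splitting the integral into the centered part $ \int g \d \hat{X} $ and the drift $ \sqrt{2/\pi} \int_0^{\infty } g(s) \frac{\d s}{\sqrt{s}} $. The Funaki--Hariya--Yor inequality (Theorem \ref{thm: FHY2}) with the convex function $ \psi(x) = |x| $ bounds $ R^+\sbra{ |\int g \d \hat{X}| } $ by the Wiener expectation $ W\sbra{ |\int g \d X| } = \sqrt{2/\pi}\, \| g \|_{L^2(\d s)} $. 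Hence
\[
R\sbra{ |J^{(2)}| }
\le \sqrt{2/\pi}\, \| f(\cdot + u) \|_{L^2(\d s)}
+ \sqrt{2/\pi} \int_0^{\infty } |f(s+u)| \frac{\d s}{\sqrt{s}} .
\]
Integrating in $ u $, the first term is controlled by $ \| f \|_{L^2(\d s)} $ because $ \| f(\cdot + u) \|_{L^2(\d s)} \le \| f \|_{L^2(\d s)} $ and the weight integrates to $ C_{\varphi} $, while the second term equals, up to the factor $ \sqrt{2/\pi}/C_{\varphi} $, the norm $ \| f \|_{\varphi} $ with $ \varphi(u) = \e^{-u} $.

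The one nontrivial point — and the reason $ L^1(\frac{\d s}{1 + \sqrt{s}}) $ is the right space — is the control of this drift-generated term. After changing the order of integration it is precisely $ \| f \|_{\varphi} $, and Lemma \ref{lem L1} supplies the equivalence $ \| f \|_{\varphi} \le C_0 \| f \|_{L^1(\frac{\d s}{1 + \sqrt{s}})} $. Feeding this in and setting $ C_1 = \max\{ \sqrt{2/\pi},\, \sqrt{2/\pi}\, C_0/C_{\varphi} \} $ gives \eqref{eq: mu prop part2}. All the remaining steps are routine; the crux is recognizing that the drift contribution, which has no $ L^2 $ counterpart and is responsible for the divergence noted in Remark \ref{rem: uncentered}, is tamed exactly by the weighted $ L^1 $ estimate of Lemma \ref{lem L1}.
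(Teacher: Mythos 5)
Your proof is correct, and the first estimate \eqref{eq: mu prop part1} is essentially the paper's argument (the paper bounds the $ L^1(\Pi^{(u)}) $-norm by the $ L^2(\Pi^{(u)}) $-norm via Cauchy--Schwarz and then invokes the It\^o isometry \eqref{eq: Wint Pi} and \eqref{eq: piuf le f}; your explicit Gaussian computation of the $ L^1 $-norm is the same bound with the sharper constant $ \sqrt{2/\pi} $). For the second estimate \eqref{eq: mu prop part2} you take a genuinely different route through the Bessel integral: the paper works with the SDE representation \eqref{eq: first def}, splitting $ \int f(s+u)\,\d X_s $ into the martingale part $ \int f(s+u)\,\d B_s $ (controlled in $ L^2 $) and the drift $ \int_0^{\infty } f(s+u) X_s^{-1}\,\d s $, whose expectation is computed from $ R^+\sbra{1/X_s} = \sqrt{2/(\pi s)} $; you instead use the centered-process decomposition \eqref{eq: Wint uncentered} and control the centered part by the Funaki--Hariya--Yor inequality \eqref{eq: FHY ineq} with $ \psi(x)=|x| $. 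The two decompositions produce the identical drift term $ \sqrt{2/\pi}\int_0^{\infty }|f(s+u)|\,\d s/\sqrt{s} $, so both proofs converge on the same crux, namely that after integrating in $ u $ this term is exactly $ \|f\|_{\varphi} $ up to a constant and is tamed by Lemma \ref{lem L1} --- you identified that correctly. The SDE route is the more elementary one for a first-moment bound; your FHY route invokes a convexity inequality that is overkill here but is exactly the tool the paper reserves for the fourth-moment estimate in the proof of Theorem \ref{thm: conti mod}, so nothing is lost. One small point in your favour: your explicit reduction $ R\sbra{|J^{(2)}|} = R^+\sbra{|\int_0^{\infty } f(s+u)\,\d X_s|} $ via the symmetry $ X \mapsto -X $ is cleaner than the paper's direct use of $ R\sbra{1/X_s} $, which only makes literal sense after the same reduction to $ R^+ $.
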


\begin{proof}
By definition, we have 
\begin{align}
\mu \sbra{ \absol{ \int_0^u f(s) \d X_s } } 
= \int_0^{\infty } \frac{\d u}{\sqrt{\pi u}} \e^{-u} 
\left\| \int_0^u f(s) \d X_s \right\|_{L^1(\Pi^{(u)})} 
\label{eq: mu prop 1}
\end{align}
By the Schwarz inequality and by the It\^o isometry \eqref{eq: Wint Pi}, we have 
\begin{align}
\text{\eqref{eq: mu prop 1}} 
\le \int_0^{\infty } \frac{\d u}{\sqrt{\pi u}} \e^{-u} 
\left\| \int_0^u f(s) \d X_s \right\|_{L^2(\Pi^{(u)})} 
\le \| \pi_uf \|_{L^2([0,u],\d s)} \le \| f \|_{L^2(\d s)} . 
\label{}
\end{align}
This proves \eqref{eq: mu prop part1}. 

By definition, we have 
\begin{align}
\mu \sbra{ \absol{ \int_0^{\infty } f(s+u) \d X_{s+u} } } 
= \int_0^{\infty } \frac{\d u}{\sqrt{\pi u}} \e^{-u} 
R \sbra{ \absol{ \int_0^{\infty } f(s+u) \d X_s } } . 
\label{eq: approx-1}
\end{align}
Using \eqref{eq: first def}, we have 
\begin{align}
R \sbra{ \absol{ \int_0^{\infty } f(s+u) \d X_s } } 
\le& 
R \sbra{ \absol{ \int_0^{\infty } f(s+u) \d B_s } } 
+ \int_0^{\infty } |f(s+u)| R \sbra{ \frac{1}{X_s} } \d s 
\label{} \\
\le& 
\| f \|_{L^2(\d s)} 
+ \sqrt{\frac{2}{\pi}} \int_0^{\infty } |f(s+u)| \frac{\d s}{\sqrt{s}} . 
\label{}
\end{align}
Hence we have 
\begin{align}
\text{\eqref{eq: approx-1}} 
\le \| f \|_{L^2(\d s)} 
+ \frac{\sqrt{2}}{\pi} \| f \|_{\varphi} 
\label{}
\end{align}
where $ \varphi(u) = \e^{-u} $. 
By Lemma \ref{lem L1}, we obtain \eqref{eq: mu prop part2}. 
The proof is now complete. 
\end{proof}

\begin{Thm} \label{thm: approx-}
Let $ f \in L^2(\d s) \cap L^1(\frac{\d s}{1 + \sqrt{s}}) $. 
Suppose that a sequence $ \{ f_n \} \subset \cS $ 
approximates $ f $ both in $ L^2(\d s) $ and in $ L^1(\frac{\d s}{1 + \sqrt{s}}) $. 
Then there exist jointly measurable functionals 
$ J^{(1)}(f;u,X^{(u)}) $ and $ J^{(2)}(f;u,\theta_u X) $ such that 
\begin{align}
J^{(1)}(f_n;u,X^{(u)}) 
\tend{}{n \to \infty } 
J^{(1)}(f;u,X^{(u)}) 
\quad \text{in $ \mu $-probability}
\label{eq: random vector}
\end{align}
and 
\begin{align}
J^{(2)}(f_n;u,\theta_u X) 
\tend{}{n \to \infty } 
J^{(2)}(f;u,\theta_u X) 
\quad \text{in $ \mu $-probability}. 
\label{eq: random vector2}
\end{align}
The limit functionals do not depend on the choice of the approximating sequence. 
Moreover, it holds for a.e. $ u \in [0,\infty ) $ 
and for $ (\Pi^{(u)} \bullet R)(\d X) $-a.e. $ X $ that 
\begin{align}
J^{(1)}(f;u,X^{(u)}) 
= \int_0^u f(s) \d X_s 
\quad \text{and} \quad 
J^{(2)}(f_n;u,\theta_u X) 
= \int_0^{\infty } f(s+u) \d (\theta_u X)_s . 
\label{eq: J1J2}
\end{align}
\end{Thm}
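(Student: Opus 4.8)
The plan is to exploit the linearity of $ J^{(1)} $ and $ J^{(2)} $ in the integrand together with the $ L^1(\mu) $-bounds of Proposition~\ref{thm: approx--}, and then to identify the resulting limits by disintegrating $ \mu $ over $ u $.

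First I would note that, for step functions, both $ J^{(1)}(\cdot\,;u,X^{(u)}) $ and $ J^{(2)}(\cdot\,;u,\theta_u X) $ are linear in the integrand, so applying \eqref{eq: mu prop part1} and \eqref{eq: mu prop part2} to $ f_n-f_m\in\cS $ gives
\begin{align}
\mu\sbra{ |J^{(1)}(f_n;u,X^{(u)}) - J^{(1)}(f_m;u,X^{(u)})| } \le \| f_n - f_m \|_{L^2(\d s)}
\end{align}
and
\begin{align}
\mu\sbra{ |J^{(2)}(f_n;u,\theta_u X) - J^{(2)}(f_m;u,\theta_u X)| } \le C_1 \cbra{ \| f_n - f_m \|_{L^2(\d s)} + \| f_n - f_m \|_{L^1(\frac{\d s}{1 + \sqrt{s}})} } .
\end{align}
Since $ \{f_n\} $ is Cauchy in both norms, the right-hand sides vanish, so $ \{J^{(1)}(f_n;u,X^{(u)})\} $ and $ \{J^{(2)}(f_n;u,\theta_u X)\} $ are Cauchy in $ L^1(\mu) $. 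As $ \mu $ is a finite (in fact probability) measure, $ L^1(\mu) $ is complete, and I would define $ J^{(1)}(f;u,X^{(u)}) $ and $ J^{(2)}(f;u,\theta_u X) $ as the respective $ L^1(\mu) $-limits; these are jointly measurable, and $ L^1(\mu) $-convergence yields the stated convergence in $ \mu $-probability, i.e.\ \eqref{eq: random vector} and \eqref{eq: random vector2}. Independence of the approximating sequence follows by interleaving: given a second sequence $ \{g_n\} $, the sequence $ f_1,g_1,f_2,g_2,\ldots $ also approximates $ f $ in both norms, so its $ L^1(\mu) $-limit exists and must agree $ \mu $-a.e.\ with the limits of both subsequences.

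Finally I would identify the limits for a.e.\ $ u $. Extract a subsequence $ \{n(k)\} $ along which both \eqref{eq: random vector} and \eqref{eq: random vector2} hold $ \mu $-a.e.; since the $ u $-marginal of $ \mu $ is $ \frac{\varphi(u)}{C_\varphi\sqrt{u}}\d u $, which is equivalent to Lebesgue measure, Fubini's theorem gives, for a.e.\ $ u $, convergence for $ (\Pi^{(u)}\bullet R)(\d X) $-a.e.\ $ X $. For the first functional, under $ \Pi^{(u)}\bullet R $ the path $ (X_s:s\le u) $ is a Brownian bridge; as $ f_{n(k)}\to f $ in $ L^2([0,u],\d s) $, the It\^o isometry \eqref{eq: Wint Pi} gives $ \int_0^u f_{n(k)}(s)\d X_s\to\int_0^u f(s)\d X_s $ in $ L^2(\Pi^{(u)}) $, hence in $ (\Pi^{(u)}\bullet R) $-probability, and matching with the $ \mu $-a.e.\ limit yields $ J^{(1)}(f;u,X^{(u)})=\int_0^u f(s)\d X_s $ $ (\Pi^{(u)}\bullet R) $-a.e. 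For the second functional, I would first invoke Lemma~\ref{lem L1} and $ \|f_{n(k)}-f\|_{\varphi}\le C_0\|f_{n(k)}-f\|_{L^1(\frac{\d s}{1+\sqrt{s}})}\to0 $ to pass, along a further subsequence, to a.e.\ $ u $ for which $ f(\cdot+u)\in L^1(\frac{\d s}{\sqrt{s}}) $ and $ f_{n(k)}(\cdot+u)\to f(\cdot+u) $ in $ L^1(\frac{\d s}{\sqrt{s}}) $ (the $ L^2(\d s) $-convergence of the shifts being automatic). For such $ u $, under $ \Pi^{(u)}\bullet R $ the shift $ \theta_u X $ is distributed as $ R $, and Lemma~\ref{lem: approx of Wint 3Bes} (applied under $ R^+ $ and $ R^- $ and averaged) gives $ \int_0^\infty f_{n(k)}(s+u)\d(\theta_u X)_s\to\int_0^\infty f(s+u)\d(\theta_u X)_s $ in $ R $-probability; matching again gives the second identity in \eqref{eq: J1J2}.

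The main obstacle is this last identification for $ J^{(2)} $: one must reconcile the $ \mu $-a.e.\ convergence along $ \{n(k)\} $ with the a.e.-$ u $ convergence of the shifted integrands $ f_{n(k)}(\cdot+u) $ in $ L^1(\frac{\d s}{\sqrt{s}}) $ along a single twice-extracted subsequence, which is precisely the role of the norm equivalence supplied by Lemma~\ref{lem L1}.
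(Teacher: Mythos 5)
Your proposal is correct and follows essentially the same route as the paper, whose own proof consists of the single line ``This is obvious by Proposition \ref{thm: approx--}'': you have simply filled in the details that the author leaves implicit, namely the $L^1(\mu)$-Cauchy argument via linearity and \eqref{eq: mu prop part1}--\eqref{eq: mu prop part2}, the interleaving argument for uniqueness, and the identification of the limits for a.e.\ $u$ via disintegration, the It\^o isometry \eqref{eq: Wint Pi}, and Lemmas \ref{lem L1} and \ref{lem: approx of Wint 3Bes}.
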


\begin{proof}
This is obvious by Proposition \ref{thm: approx--}. 
\end{proof}

Now we proceed to prove Theorem \ref{thm: approx} and Theorem \ref{thm: approx+} 
at the same time. 

\begin{proof}[Proof of Theorem \ref{thm: approx} and Theorem \ref{thm: approx+}.]
Let $ \{ f_n \} $ be a sequence of step functions 
such that $ \{ f_n \} $ approximates 
$ f $ both in $ L^2(\d s) $ and in $ L^1(\frac{\d s}{1 + \sqrt{s}}) $. 
Since $ f_n $ is a step function, we have 
\begin{align}
\int_0^{\infty } f_n(s) \d X_s 
= I(f_n;g(X),X) 
\label{}
\end{align}
where $ I(f_n;u,X) = J^{(1)}(f_n;u,X^{(u)}) + J^{(2)}(f_n;u,\theta_u X) $. 
Thus Theorem \ref{thm: approx-} shows that 
\begin{align}
\int_0^{\infty } f_n(s) \d X_s 
\tend{}{n \to \infty } 
I(f;g(X),X) 
\quad \text{in $ \sW^G $-probability}
\label{}
\end{align}
where $ I(f;u,X) = J^{(1)}(f;u,X^{(u)}) + J^{(2)}(f;u,\theta_u X) $ 
and where $ G(X)=\e^{-g(X)} \in L^1_+(\sW) $. 
By (ii) of Proposition \ref{prop: loc in meas}, 
we obtain 
\begin{align}
\int_0^{\infty } f_n(s) \d X_s 
\tend{}{n \to \infty } 
I(f;g(X),X) 
\quad \text{locally in $ \sW $-measure}
\label{}
\end{align}
where 
\begin{align}
I(f;u,X) 
= J^{(1)}(f;u,X^{(u)}) + J^{(2)}(f;u,\theta_u X) . 
\label{}
\end{align}
This completes the proof. 
\end{proof}

\subsection{Continuous modification}

Let us prove Theorem \ref{thm: conti mod}. 

\begin{proof}[Proof of Theorem \ref{thm: conti mod}.]
For the coordinate process $ X $ and for $ u=g(X) $, we define 
\begin{align}
\hat{(\theta_u X)}_t = (\theta_u X)_t - \sqrt{\frac{2}{\pi}} \int_0^t \frac{\d s}{\sqrt{s}} 
\quad \text{for $ t \ge 0 $}. 
\label{}
\end{align}

Let $ f \in L^2([0,T],\d s) $. Then we may define 
\begin{align}
J^{(3)}(f;u,\theta_u X) = \int_0^{\infty } f(s+u) \d \hat{(\theta_u X)}_s 
\quad \text{$ \mu $-a.s.} 
\label{}
\end{align}
Applying Theorem \ref{thm: FHY} for $ \psi(x)=x^4 $ 
and then using the Gaussian property of $ \{ (X_t),W \} $, 
we see that 
\begin{align}
\mu \sbra{ \absol{ J^{(3)}(f;u,\theta_u X) }^4 } 
\le \int_0^{\infty } \frac{\d u}{\sqrt{\pi u}} \e^{-u} 
R \sbra{ \absol{ J^{(3)}(f;u,\theta_u X) }^4 } 
\le 3 \| f \|_{L^2(\d s)}^2 . 
\label{}
\end{align}

For $ t \in [0,T] $, we write $ f_t = f 1_{[0,t)} $. 
Set 
\begin{align}
M(t) = t + \int_0^t |f(s)|^2 \d s . 
\label{}
\end{align}
Since the function $ v=M(t) $ is continuous and strictly-increasing, 
there exists its continuous inverse $ t=L(v) $. 
Then, for $ 0 \le v_1 < v_2 \le T $, it holds that 
\begin{align}
& \mu \sbra{ \absol{ J^{(3)}(f_{L(v_2)};u,\theta_u X) 
- J^{(3)}(f_{L(v_1)};u,\theta_u X) }^4 } 
\label{} \\
\le& 3 \rbra{ \int_{L(v_1)}^{L(v_2)} |f(s)|^2 \d s }^2 
\le 3 |v_2-v_1|^2 . 
\label{}
\end{align}
From this inequality, 
we appeal to Kolmogorov's continuity theorem, 
and we see that 
there exists a process $ (K^{(3)}_v(f;u,\theta_u X):v \in [0,M(T)]) $ 
which is a $ \mu $-a.s. continuous modification 
of $ \{ J^{(3)}(f_{L(v)};u,\theta_u X): v \in [0,M(T)] \} $. 
In the same way as above, 
we may construct a continuous process $ \{ K^{(1)}_v(f;u,X^{(u)}):v \in [0,M(T)]) \} $ 
which is a $ \mu $-a.s. continuous modification 
of $ \{ J^{(1)}(f_{L(v)};u,X^{(u)}): v \in [0,M(T)] \} $. 

Set 
\begin{align}
U = \cbra{ u \in [0,\infty ) : \int_0^{\infty } |f(s+u)| \frac{\d s}{\sqrt{s}} < \infty } . 
\label{eq: def of U}
\end{align}
By Lemma \ref{lem L1}, we see that $ U^c $ has Lebesgue measure zero. 
For $ u \in U $ and $ t \in [0,T] $, we define 
\begin{align}
I_t(f;u,X) 
= K^{(1)}_{M(t)}(f;u,X^{(u)}) + K^{(3)}_{M(t)}(f;u,\theta_u X) 
+ \sqrt{\frac{2}{\pi}} \int_0^{\infty } f_t(s+u) \frac{\d s}{\sqrt{s}} 
\label{}
\end{align}
and, for $ u \notin U $ and $ t \in [0,T] $, we define $ I_t(f;u,X) = 0 $. 
Therefore we conclude that 
the resulting process $ \{ I_t(f;u,X):t \in [0,T] \} $ 
is as desired. 
\end{proof}

{\bf Acknowledgements.} 
The author would like to thank Professors Marc Yor and Tadahisa Funaki 
for their fruitful comments.

\bibliographystyle{plain}

\end{document}